\mathchardef\mhyphen="2D
\theoremstyle{definition}
\newtheorem{thm}{Theorem}
\newtheorem{lem}[thm]{Lemma}
\newtheorem{defn}{Definition}
\newtheorem{exa}{Example}
\newcommand{\R}{{\bf R}}
\begin{document}

\begin{frontmatter}

\title{Randomized block Krylov method for approximation of truncated tensor SVD}

\author[label1]{Malihe Nobakht Kooshkghazi$^\dagger$}
\affiliation[label1]{organization={Independent researcher, m.nobakht88@gmail.com},
            }

\author[inst1]{Salman Ahmadi-Asl$^{\dagger,*}$}

\affiliation[inst1]{organization={Research Center of the Artificial Intelligence Institute,  and Lab of Machine Learning and Knowledge Representation, Innopolis University, 420500},
            city={Innopolis},
            country={Russia, {\em s.ahmadiasl@innopolis.ru}}}

\author[inst2]{André L. F. de Almeida}

\affiliation[inst2]{organization={Department of Teleinformatics Engineering, Federal University of Ceara, Fortaleza, Brazil,\\ $^\dagger$Equal contribution,\,\,$^*$Corresponding author},
           }

\begin{abstract}
This paper focuses on exploring the use of the block Krylov subspace method for approximating the truncated tensor singular value decomposition (T-SVD). We present the theoretical findings of our proposed randomized technique. To validate the effectiveness and practicality of this approach, we conduct several numerical experiments using both synthetic and real-world datasets. The results indicate that our method yields promising outcomes. Additionally, we demonstrate applications of this approach in data completion and data compression.
\end{abstract}

\begin{keyword}
Orthogonal projection \sep Randomized block Krylov method \sep Randomized algorithms \sep T-product \sep Truncated T-SVD.
\end{keyword}

\end{frontmatter}


\section{Introduction}
\label{sec:intro}
Tensors are multidimensional arrays that generalize scalars, vectors, and matrices to higher dimensions. While a scalar is a zero-dimensional tensor, a vector is a one-dimensional tensor, and a matrix is a two-dimensional tensor, tensors can have any number of dimensions. Tensors are fundamental in neural networks, where data inputs, weights, and outputs are often represented as tensors. Tensors are widely used in various fields, including signal processing \cite{Sidiropoulos2017,Miron2020,Yassin_Andre}, telecommunications \cite{Almeida_Elsevier_2007, Ximenes2015TSP,dearaujo2021channel}, machine learning \cite{cichocki2017tensor}, physics \cite{orus2019tensor}, Hammerstien system identification \cite{elden2019solving} and data analysis \cite{ahmadi2023fast,asante2021matrix,ahmadi2023efficient}, due to their ability to capture complex, high-dimensional relationships in data. They provide a natural way to represent high-dimensional data, such as color images (3rd-order tensors: height $\times$ width $\times$ color channels), video sequences (4th-order tensors: height $\times$ width $\times$ color channels $\times$ time), hyperspectral images (3rd-order tensors: height $\times$ width $\times$ spectral bands), social network data (3rd-order tensors: users $\times$ users $\times$ interactions), for more applications of tensors, see \cite{comon2009tensor,kolda2009tensor} and the references therein. 

Similar to SVD, which is used to analyze matrices, tensor decompositions are mathematical tools that generalize matrix decompositions (e.g., SVD, PCA) to higher-order tensors. They aim to represent a tensor as a combination of simpler, lower-dimensional components. The most common tensor decompositions include: Canonical polyadic decomposition \cite{hitchcock1927expression}, Tucker decomposition \cite{tucker1964extension}, tensor train decomposition \cite{oseledets2011tensor}, tensor ring decomposition \cite{zhao2016tensor} and tensor SVD (T-SVD) \cite{kilmer2011factorization}. Tensor decompositions have numerous applications across various domains, such as signal processing to machine learning, tensors are indispensable tools in modern data science. The randomized block Krylov subspace method was proposed in \cite{musco2015randomized} and was shown to be a very efficient technique for the low-rank approximation of matrices. Motivated by these results, it was extended to several tensor decompositions. For example, it was previously utilized in \cite{qiu2024towards} and \cite{yu2023randomized} for Tucker and TT decompositions, respectively. It is known that the randomized block Krylov subspace method can better capture the range of a matrix compared to the classical randomized algorithms, and this leads to obtaining  better low-rank approximations. The integration of randomization and Krylov methods is a powerful paradigm shift. Randomization provides a cheap, parallelizable way to guess a good subspace, while Krylov methods provide a powerful, iterative way to refine that guess to high accuracy. The synergy between them creates algorithms that are faster, more robust, and come with stronger theoretical guarantees than either approach used in isolation. This has made them the method of choice for large-scale low-rank matrix approximations in both academic research and industrial practice. It is worth highlighting that  randomized algorithms are applicable for computing low-rank approximations of data tensors that have low-rank structures, such as images and videos.

In this paper, we propose to use the randomized block Krylov method for the computation of the truncated Tensor SVD (T-SVD). It is another extension of the method proposed in \cite{musco2015randomized} to tensors. To the best of our knowledge, this is the first paper discussing the possibility of using the randomized block Krylov method for the T-SVD model. A detailed theoretical analysis of the proposed algorithm is discussed, and they are supported by simulation results. An application of the proposed randomized method to image completion is given. Our simulation results show the effectiveness of the proposed randomized algorithm for the synthetic and real-world datasets.

This work makes the following contributions:
\begin{itemize}
\item We introduce a randomized block Krylov method to efficiently approximate the truncated T-SVD.
\item We present a detailed theoretical analysis, establishing guarantees for the accuracy of the proposed algorithm.
\item Through extensive simulations, we validate our theoretical findings and demonstrate the practical utility of the method on image completion problems.
\end{itemize}

The paper is organized as follows: Section \ref{sec:intro}, is devoted to presenting the preliminary concepts and definitions. The proposed randomized algorithm is outlined in Section \ref{Sec:prop} and in Section \ref{Sec:theo} the theoretical results are outlined. The numerical experiments are given in Section \ref{sec:experi}. Finally, the conclusion is presented in Section \ref{sec:conclu}.

\section{Preliminary concepts}\label{sec:intro}
This section presents some definitions and lemmas that will be used in the sequel. We denote tensors with calligraphic letters and matrices with capital letters. The set of real and natural numbers denote by $\mathbb{R}$ and $\Bbb N$, respectively. Let $O$, and $\mathcal{O}$ stand for a zero matrix, and a zero tensor, respectively. The identity matrix of size $n\times n$ is denoted by $ I_{n}$.

Let $\mathcal{A} \in \mathbb{R}^{n_1 \times n_2 \times n_3}$ be a third-order tensor. We denote its elements as $\mathcal{A}(i,j,k)$, where $i = 1, \dots, n_1$, $j = 1, \dots, n_2$, and $k = 1, \dots, n_3$. A third-order tensor can be viewed as a stack of frontal slices. The $k$-th frontal slice of $\mathcal{A}$ is denoted as ${A}^{(k)} \in \mathbb{R}^{n_1 \times n_2}$. Also, $ \mathcal{A}(:,j,:) $ gives us the $j$-th lateral slice of tensor $\mathcal{A}$. A tube of $\mathcal{A}$ is a vector obtained by fixing the first two indices and varying the third, e.g., $\mathcal{A}(i,j,:)$. Let $\mathcal{X}, \mathcal{Y} \in \Bbb R^{n_{1}\times n_{2} \times n_{3}}$. The inner product between $\mathcal{X}$ and $\mathcal{Y}$ is defined as
$
 <\mathcal{X} , \mathcal{Y} >= \sum\limits_{i,j,k} x_{ijk}y_{ijk}.$ The notation ``conj'' is used to denote the component-wise conjugate of a complex tensor.

\begin{defn}[\cite{kilmer2011factorization}]
The discrete Fourier transform (DFT) matrix $ F_{n} $ is of the form
\begin{equation*}
F_{n}=\begin{bmatrix}
1 & 1 & 1& \ldots &1 \\
1 & \omega & \omega ^{2}& \ldots & \omega^{(n-1)} \\
\vdots & \vdots & \vdots &\ddots & \vdots\\
1& \omega^{n-1} &\omega^{2(n-1})& \ldots & \omega^{(n-1)(n-1)}\\
\end{bmatrix}\in \mathbb{C}^{n\times n},
\end{equation*}
where $\omega=e^{-\frac{2\pi i}{n}}$. 
Note that $\dfrac{F_{n}}{\sqrt{n}}$ is an orthogonal matrix, in the sense that 
\[
F_{n}^{\ast} F_{n}=F_{n} F_{n}^{\ast}=nI_{n}.
\]
Thus, $ F_{n}^{-1}=\dfrac{F_{n}^{\ast}}{n} $. Now, we consider the DFT on tensors. For $\mathcal{X} \in \mathbb{R}^{n_{1}\times n_{2} \times n_{3}}$, we denote by $\overline{\mathcal{X}} \in \mathbb{C}^{n_{1}\times n_{2} \times n_{3}}$ the result of the DFT on $\mathcal{X}$ along the third dimension, that is, performing the DFT on all the tubes of $\mathcal{X}$. By using the MATLAB command $\mathtt{fft}$, we obtain
\[
 \overline{\mathcal{X}} = \mathtt{fft}(\mathcal{X}, [ ], 3).
\]
Similarly, we can compute $ \mathcal{X} $ in terms of $  \overline{\mathcal{X}} $ using the inverse $\mathtt{fft}$:
\[
\mathcal{X} = \mathtt{ifft}(\overline{\mathcal{X}}, [ ], 3).
\]
We denote by $ \overline{X} \in \mathbb{C}^{n_{1}n_{3}\times n_{2}n_{3}}$ a block diagonal matrix with its $i\mathrm{th}$ block on the diagonal as the $i\mathrm{th}$ frontal slice $ \overline{X}^{(i)}$ of $\overline{\mathcal{X}}$, that is,
\begin{equation*}
\overline{ X} = \mathtt{bdiag}(\overline{\mathcal{X}})=\begin{bmatrix}
\overline{X}^{(1)}& \\
&\overline{X}^{(2)}& \\
 &  &\ddots & \\
&  &&  &\overline{X}^{(n_{3})}\\
\end{bmatrix},
\end{equation*}
where $\mathtt{bdiag}$ is an operator that maps the tensor $ \overline{\mathcal{X}} $ to the block diagonal matrix $ \overline{X}$. Also, we define the block circulant matrix $\mathtt{bcirc}(\mathcal{X}) \in \mathbb{R}^{n_{1}n_{3}\times n_{2}n_{3}}$ of $ \mathcal{X} $ by
\begin{equation*}
\mathtt{bcirc}(\mathcal{X})=\begin{bmatrix}
{ X}^{(1)}& { X}^{(n_{3})}&\ldots & {X}^{(2)}\\
{X}^{(2)}&{X}^{(1)}& \ldots & {X}^{(3)} \\
\vdots &\vdots  &\ddots &\vdots \\
{X}^{(n_{3})}& {X}^{(n_{3}-1)}  & \ldots &{X}^{(1)}\\
\end{bmatrix}.
\end{equation*}
\end{defn}
From \cite{kilmer2013third} the following relation shows that  the block circulant matrix can be block diagonalized, i.e
\begin{eqnarray}\label{mn35}
\overline{X}=(F_{n_{3}}\otimes I_{n_{1}}) \mathtt{bcirc}(\mathcal{X})(F^{*}_{n_{3}}\otimes I_{n_{2}}),
\end{eqnarray}
where $\mathcal{X}$ is of dimension $n_1 \times n_2 \times n_3$.
\begin{defn}[\cite{kilmer2011factorization}]
Let $ \mathcal{X} $ be a tensor of order three and dimension $n_{1}\times n_{2}\times n_{3}$. Then, the operations unfold and fold are defined by
\begin{equation*}
\mathtt{unfold}(\mathcal{X})=\begin{bmatrix}
{X}^{(1)}\\
{X}^{(2)}\\
\vdots \\
{X}^{(n_{3})}
\end{bmatrix},\ \mathtt{fold}(\mathtt{unfold}(\mathcal{X}))=\mathcal{X},
\end{equation*}
where $\mathtt{unfold}$ maps $ \mathcal{A} $ to a matrix of size $n_{1}n_{3} \times n_{2}$ and $\mathtt{fold}$ is its inverse operator.
\end{defn}
 Kilmer and her collaborators \cite{kilmer2011factorization} introduced the T-product operation between two third-order tensors as a useful generalization of matrix multiplication for tensors of order three. By using this multiplication, tensor factorizations such as SVD, QR, and eigen decompositions were deﬁned on tensors similar to matrices \cite{kilmer2013third}. The T-product has been applied in signal processing \cite{semerci2014tensor}, machine learning \cite{settles2007multiple}, computer vision \cite{zhang2014novel}. We now recall this definition.
\begin{defn} \label{m11}
Let $\mathcal{X} \in \mathbb{R}^{n_{1}\times n_{2}\times n_{3}}$ and $ \mathcal{Y} \in \mathbb{R}^{n_{2}\times n_4 \times n_{3}}$. Then, T-product, $ \mathcal{X}* \mathcal{Y} $ is defined to be the tensor 
\[
\mathcal{X} * \mathcal{Y}=\mathtt{fold}(\mathtt{bcirc}(\mathcal{X}).\mathtt{unfold}(\mathcal{Y})),
\]
of dimension $ n_{1}\times n_4 \times n_{3} $. 
The $T$-product is equivalent to matrix multiplication in the Fourier domain; that is, $\mathcal{W} = \mathcal{X} * \mathcal{Y}$ is equivalent to $\overline{W} =\overline{ X}\,\overline{Y}$. Specifically, the T-product can be computed in the Fourier domain using Algorithm  \ref{ALG:TP}.

For a tensor $\mathcal{A}\in\mathbb{R}^{n_1\times n_2\times n_3}$, the range is defined as follows \cite{kilmer2013third}
\[
{\rm Range}({\mathcal A})=\{\mathcal{A}*\mathcal{X},\,\,\,\forall \mathcal{X}\in\mathbb{R}^{n_2\times n_4\times n_3}\}.
\]  
\end{defn} 
\begin{defn}[\cite{Ichi},\cite{nobakht2023}]
Let $\mathcal{A} \in \Bbb R^{n_{1}\times m_{1}\times n_{3}}, \mathcal{B} \in \Bbb R^{n_{1}\times m_{2}\times n_{3}}, \mathcal{C} \in \Bbb R^{n_{2}\times m_{1}\times n_{3}} $, and $\mathcal{D} \in \Bbb R^{n_{2}\times m_{2}\times n_{3}}$ be tensors. The block tensor 
\[
 \left[\begin{array}{cccc}
\mathcal{A} & \mathcal{B} \\
 \mathcal{C} & \mathcal{D} 
\end{array} \right] \in \Bbb R^{(n_{1}+n_{2})\times (m_{1}+m_{2})\times n_{3}},
\]
is defined by compositing the frontal slices of the four tensors.
\end{defn}
\begin{lem}[\cite{Ichi},\cite{nobakht2023}]
Let $\mathcal{A}, \mathcal{A}_{1} \in \Bbb R^{n\times s \times n_{3}}, \mathcal{B}, \mathcal{B}_{1} \in \Bbb R^{n\times p \times n_{3}}, \mathcal{A}_{2} \in \Bbb R^{l \times s \times n_{3}}, \mathcal{B}_{2} \in \Bbb R^{l\times p \times n_{3}}, \mathcal{C} \in \Bbb R^{s \times n \times n_{3}}, \mathcal{D} \in \Bbb R^{p\times n \times n_{3}}$, and $ \mathcal{F} \in \Bbb R^{n\times n \times n_{3}}$. Then
\begin{itemize}
\item[1-]
$
\mathcal{F} * [\mathcal{A}\quad \mathcal{B}]=[ \mathcal{F} * \mathcal{A} \quad \mathcal{F} * \mathcal{B}]\in \Bbb R^{n \times (s+p)\times n_{3}}$;
\item[2-]  
$
\left[\begin{array}{cccc}
\mathcal{C} \\
 \mathcal{D}  
\end{array} \right]* \mathcal{F}=\left[\begin{array}{cccc}
\mathcal{C}* \mathcal{F} \\
 \mathcal{D} * \mathcal{F} 
\end{array} \right] \in \Bbb R^{(s+p)\times n \times n_{3}}
$;
\item[3-] 
$
[\mathcal{A} \quad \mathcal{B}] * \left[\begin{array}{cccc}
\mathcal{C} \\
 \mathcal{D}  
\end{array} \right]=\mathcal{A}* \mathcal{C}+ \mathcal{B} * \mathcal{D} \in \Bbb R^{n\times n \times n_{3}}
$;
\item[4-] 
$
\left[\begin{array}{cccc}
\mathcal{A}_{1}&\mathcal{B}_{1} \\
 \mathcal{A}_{2}& \mathcal{B}_{2}
\end{array} \right] * \left[\begin{array}{cccc}
\mathcal{C} \\
 \mathcal{D}
\end{array} \right]= \left[\begin{array}{cccc}
\mathcal{A}_{1} * \mathcal{C}+ \mathcal{B}_{1} *\mathcal{D}\\
 \mathcal{A}_{2}* \mathcal{C}+ \mathcal{B}_{2} * \mathcal{D}
\end{array} \right] \in \Bbb R^{(l+n) \times n \times n_{3}}
$.
\end{itemize}
\end{lem}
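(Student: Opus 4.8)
The plan is to prove all four identities by passing to the Fourier domain, where the T-product reduces to ordinary matrix multiplication and each statement becomes an elementary block rule for matrix products. Recall from Definition \ref{m11} that $\mathcal{W}=\mathcal{X}*\mathcal{Y}$ is equivalent to $\overline{W}=\overline{X}\,\overline{Y}$, where $\overline{X}=\mathtt{bdiag}(\overline{\mathcal{X}})$ is the block-diagonal matrix assembled from the frontal slices of $\overline{\mathcal{X}}$. Since the DFT along the third mode acts tube-wise, it is linear and commutes with the operation of compositing frontal slices; consequently, forming a block tensor corresponds, frequency by frequency in the Fourier domain, to forming the analogous block-partitioned matrix. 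The first thing I would do is make this bookkeeping precise: verify that the $i$-th frontal slice of $\overline{[\mathcal{A}\;\mathcal{B}]}$ is the column-partitioned matrix $[\overline{A}^{(i)}\;\overline{B}^{(i)}]$, and symmetrically for row-stacked blocks.

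With this correspondence in hand, I would first record the base distribution laws, items~1 and~2. For item~1, in each Fourier frequency $i$ the identity reduces to $\overline{F}^{(i)}[\overline{A}^{(i)}\;\overline{B}^{(i)}]=[\overline{F}^{(i)}\overline{A}^{(i)}\;\overline{F}^{(i)}\overline{B}^{(i)}]$, which is the standard rule for multiplying a matrix into a column-partitioned matrix; applying the inverse DFT returns the stated tensor identity. Item~2 is entirely symmetric, using a row partition in place of a column partition. Items~3 and~4 then follow by combining these two laws with the distributivity of the T-product over tensor addition, which is itself immediate in the Fourier domain since matrix multiplication distributes over addition. For item~3, expanding the block product in each frequency gives $\overline{A}^{(i)}\overline{C}^{(i)}+\overline{B}^{(i)}\overline{D}^{(i)}$, i.e., the Fourier-domain form of $\mathcal{A}*\mathcal{C}+\mathcal{B}*\mathcal{D}$, and inverting the transform yields the claim. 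Item~4 is obtained by applying item~3 to each of the two rows of the $2\times 2$ block tensor.

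The only genuine obstacle is the bookkeeping in the first step: one must check carefully that compositing frontal slices to form a block tensor, applying the tube-wise DFT, and forming the block-diagonal matrix $\overline{X}$ all interact consistently, so that a block tensor is sent to the correspondingly block-partitioned matrix. Once this correspondence is fixed there is no remaining mathematical content, since every identity is an instance of a trivial block-matrix multiplication rule. As an alternative that avoids the Fourier picture, one could argue directly from $\mathcal{X}*\mathcal{Y}=\mathtt{fold}(\mathtt{bcirc}(\mathcal{X})\,\mathtt{unfold}(\mathcal{Y}))$ together with relation \eqref{mn35}, tracking how $\mathtt{bcirc}$ and $\mathtt{unfold}$ act on concatenated frontal slices; however, the Fourier-domain route keeps the algebra minimal and is the one I would write up.
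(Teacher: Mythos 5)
The paper does not actually prove this lemma: it is stated as a known result imported from the cited references, so there is no in-paper proof to compare against. Your argument is nevertheless correct and complete as a standalone proof. The one point that carries all the content — that forming a block tensor by compositing frontal slices commutes with the tube-wise DFT, so that the $i$-th frontal slice of $\overline{[\mathcal{A}\;\;\mathcal{B}]}$ is exactly $[\overline{A}^{(i)}\;\;\overline{B}^{(i)}]$ — is true precisely because the DFT acts on each tube independently and every tube of the block tensor lies entirely inside one constituent block; once that is fixed, the equivalence $\mathcal{W}=\mathcal{X}*\mathcal{Y}\Leftrightarrow\overline{W}=\overline{X}\,\overline{Y}$ from Definition~\ref{m11} reduces each item to a standard block-matrix multiplication rule slice by slice, and the inverse DFT returns the tensor identities. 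Your derivation of items 3 and 4 from items 1 and 2 together with distributivity of the T-product over addition (immediate in the Fourier domain, or from linearity of $\mathtt{bcirc}$ as in Lemma~\ref{mn2}) is also sound. The alternative route you mention via $\mathtt{bcirc}$ and $\mathtt{unfold}$ would work equally well but requires messier tracking of how block circulants interleave the concatenated slices, so the Fourier-domain write-up is the better choice.
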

\begin{defn}\cite{cao2023some}\label{mn3}
The Frobenius norm and the spectral norm of $ \mathcal{X} \in \Bbb R^{n_{1}\times n_{2} \times n_{3}}$ are defined in the following form
\[
\parallel \mathcal{X} \parallel _{F}^{2}=\sum\limits_{i,j,k} \vert x_{ijk}\vert^{2},\quad \parallel \mathcal{X} \parallel _{2}=\parallel \mathtt{bcirc}( \mathcal{X})\parallel_{2}.
\]

\end{defn}
It can be proved that for a tensor $\mathcal{X}   \in\mathbb{R}^{n_1\times n_2\times n_3}$, we have 
\begin{eqnarray}\label{eq_fou}
\|{\mathcal X}\|^2_F=\frac{1}{n_3}\sum_{i=1}^{n_3}\|\overline{{\mathcal X}}(:,:,i)\|_F^2, \quad (\parallel \mathcal{X}\parallel_{F}=\frac{1}{\sqrt{n_{3}}}\parallel \overline{X}\parallel_{F}).
\end{eqnarray}
where $\overline{\mathcal{X}}(:,:,i)$ is the $i$-th frontal slice of the tensor $\overline{\mathcal{X}}=\mathtt{fft}(\mathcal{X},[],3)$, which calculates the fast Fourier transform of all tubes of the tensor $X$, see \cite{zhang2018randomized}.
\begin{lem}\label{mn43}
Let $ \mathcal{X} \in \Bbb R^{n_{1}\times n_{2}\times n_{3}} $ and $\mathcal{Y} \in \Bbb R^{n_{2}\times n_{4}\times n_{3}}$. Then,
\[\parallel \mathcal{X} * \mathcal{Y}\parallel_{2}\leq\parallel \mathcal{X}\parallel_{2}
\parallel \mathcal{Y}\parallel_{2}.\]
\begin{proof} The following computations 
\begin{eqnarray*}
\parallel \mathcal{X} *\mathcal{Y}\parallel_{2}= \parallel\mathtt{bcirc}(\mathcal{X})\mathtt{bcirc}(\mathcal{Y})\parallel_2\leq \parallel\mathtt{bcirc}(\mathcal{X})\parallel_2  \parallel\mathtt{bcirc}(\mathcal{Y})\parallel_2 . 
\end{eqnarray*}
completes the proof.
\end{proof}
\end{lem}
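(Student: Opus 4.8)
The plan is to reduce the tensor inequality entirely to a statement about ordinary matrices, exploiting the fact that by Definition \ref{mn3} the spectral norm of a tensor is \emph{defined} as $\|\mathcal{X}\|_2 = \|\mathtt{bcirc}(\mathcal{X})\|_2$. The only nontrivial ingredient I need is the multiplicative (ring‑homomorphism) property of the $\mathtt{bcirc}$ operator, namely
\[
\mathtt{bcirc}(\mathcal{X} * \mathcal{Y}) = \mathtt{bcirc}(\mathcal{X})\,\mathtt{bcirc}(\mathcal{Y}).
\]
Once this identity is in hand, the lemma is immediate: apply the definition of the tensor spectral norm, invoke submultiplicativity of the matrix spectral norm $\|AB\|_2 \le \|A\|_2\|B\|_2$, and then re‑identify the two matrix norms as the tensor norms $\|\mathcal{X}\|_2$ and $\|\mathcal{Y}\|_2$.

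So the real work is justifying the homomorphism identity, and I would do this in either of two equivalent ways. The first, purely combinatorial route starts from the definition $\mathcal{X} * \mathcal{Y} = \mathtt{fold}(\mathtt{bcirc}(\mathcal{X})\,\mathtt{unfold}(\mathcal{Y}))$ in Definition \ref{m11}. Observing that $\mathtt{unfold}(\mathcal{Y})$ is exactly the first block column of $\mathtt{bcirc}(\mathcal{Y})$, and that the product of two block circulant matrices is again block circulant (block circulant matrices form a matrix algebra, being simultaneously block‑diagonalizable), one checks that the first block column of $\mathtt{bcirc}(\mathcal{X})\,\mathtt{bcirc}(\mathcal{Y})$ equals $\mathtt{bcirc}(\mathcal{X})\,\mathtt{unfold}(\mathcal{Y}) = \mathtt{unfold}(\mathcal{X} * \mathcal{Y})$. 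Since a block circulant matrix is uniquely determined by its first block column, this forces the stated equality.

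The second, cleaner route uses the Fourier‑domain diagonalization already recorded in \eqref{mn35}: conjugating $\mathtt{bcirc}(\cdot)$ by $F_{n_3}\otimes I$ yields the block diagonal matrix $\overline{X}$, and the T‑product corresponds to block‑diagonal matrix multiplication, $\overline{W} = \overline{X}\,\overline{Y}$ (the statement $\mathcal{W}=\mathcal{X}*\mathcal{Y} \Leftrightarrow \overline{W}=\overline{X}\,\overline{Y}$ in Definition \ref{m11}). Because the conjugating factors telescope under composition, mapping both sides back through $\mathtt{bcirc}$ gives the homomorphism identity directly. I would present whichever is shorter; the Fourier version is essentially a one‑line consequence of facts already stated in the excerpt.

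The main (and only) obstacle is thus establishing that $\mathtt{bcirc}$ turns the T‑product into an ordinary matrix product; everything after that is the standard submultiplicativity of the matrix spectral norm together with the definition of $\|\cdot\|_2$. I expect no difficulty beyond bookkeeping with the block structure, and I would be careful only to confirm that the product of block circulant matrices is genuinely block circulant (so that ``determined by its first block column'' applies) when using the first route.
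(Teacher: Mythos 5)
Your proposal is correct and follows essentially the same route as the paper: pass to $\mathtt{bcirc}$, use the multiplicativity $\mathtt{bcirc}(\mathcal{X}*\mathcal{Y})=\mathtt{bcirc}(\mathcal{X})\,\mathtt{bcirc}(\mathcal{Y})$ (recorded in the paper as part of Lemma \ref{mn2}), and conclude by submultiplicativity of the matrix spectral norm together with Definition \ref{mn3}. The only difference is that you also sketch a proof of the homomorphism identity, which the paper simply cites.
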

\begin{defn}\cite{kilmer2011factorization,zheng2021t}
The transpose of a tensor $\mathcal{X} \in \mathbb{R}^{n_1 \times n_2 \times n_3}$, denoted $\mathcal{X}^\top \in \Bbb R^{n_2 \times n_1 \times n_3}$, is obtained by transposing each frontal slice and then reversing the order of the transposed slices from $2$ to $n_3$. Mathematically,
\[
\mathcal{X}^\top(i,j,1) = \mathcal{X}(j,i,1),\quad\mathcal{X}^\top(i,j,k) = \mathcal{X}(j,i,n_3 +2 -k),\,\,k=2,\ldots,I_3.
\] 
Also, suppose that $\mathcal{X}$ and $\mathcal{Y}$ are tensors such that $ \mathcal{X}\star \mathcal{Y} $ and $ \mathcal{Y}^{T}\star  \mathcal{X}^{T} $ are defined\footnote{The dimensional requirements for the t-product ($\mathcal{X} \in \mathbb{R}^{n_1 \times n_2 \times n_3}$ and $\mathcal{Y} \in \mathbb{R}^{n_2 \times n_4 \times n_3}$) and for tensor addition (both tensors must have identical dimensions $n_1 \times n_2 \times n_3$).}. Then
\[
(\mathcal{X} \star \mathcal{Y})^{T}=\mathcal{Y}^{T}\star \mathcal{X}^{T}, \quad (\mathcal{X}+ \mathcal{Y})^{\top}=\mathcal{X}^{\top}+\mathcal{Y}^{\top} .
\]
\end{defn}
A tensor is called T-symmetric if $\mathcal{X}^T=\mathcal{X}$. 
\\
\begin{lem}\label{mn2}\cite{zheng2021t}, \cite{lund2020tensor}
Let $\mathcal{X} \in \mathbb{R}^{n_{1}\times n_{2}\times n_{3}}$ and let $ \mathcal{Y} \in \mathbb{R}^{n_{2}\times n_4 \times n_{3}}$. Then 
\begin{itemize}
    \item 
    The matrix $\mathtt{bcirc}$ associated to $ \mathcal{X}*\mathcal{Y}$ is related to $ \mathtt{bcirc}(\mathcal{X})$ and $ \mathtt{bcirc}(\mathcal{Y})$,
\[
\mathtt{bcirc}(\mathcal{X}*\mathcal{Y})=\mathtt{bcirc}(\mathcal{X}) \mathtt{bcirc}(\mathcal{Y}).
\]
\item  
$\mathtt{bcirc}(\mathcal{X}^{\top})=(\mathtt{bcirc}(\mathcal{X}))^T.$
\item 
For $k=0,1,2,3,\dots$, 
\[
\mathtt{bcirc}(\mathcal{X}^{k})= (\mathtt{bcirc}(\mathcal{X}))^{k}.
\]
\item 
 The operator $\mathtt{bcirc}$ is linear, that is, 
 \[
 \mathtt{bcirc}(\alpha \mathcal{X}+ \beta \mathcal{Y})=\alpha \mathtt{bcirc}(\mathcal{X})+ \beta \mathtt{bcirc}(\mathcal{Y}),
 \]
 where  $\mathcal{X}$ and $\mathcal{Y}$ are tensors of the same dimension, and $\alpha, \beta \in \Bbb C$. 
\end{itemize}
\end{lem}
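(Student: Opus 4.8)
The plan is to handle the four items in an order that lets the later ones build on the earlier ones: linearity (item~4) and the product rule (item~1) first, then the power rule (item~3) by induction from item~1, and the transpose rule (item~2) by a direct slice-index computation. Throughout I write the $(p,q)$ block of any $\mathtt{bcirc}(\cdot)$ as the frontal slice indexed by $\langle p-q\rangle:=((p-q)\bmod n_{3})+1\in\{1,\dots,n_{3}\}$, which one reads off directly from the circulant pattern in the definition of $\mathtt{bcirc}$. Item~4 is then immediate: since the $k$-th frontal slice of $\alpha\mathcal{X}+\beta\mathcal{Y}$ is $\alpha X^{(k)}+\beta Y^{(k)}$, the $(p,q)$ block of $\mathtt{bcirc}(\alpha\mathcal{X}+\beta\mathcal{Y})$ equals $\alpha X^{(\langle p-q\rangle)}+\beta Y^{(\langle p-q\rangle)}$, which is the $(p,q)$ block of $\alpha\,\mathtt{bcirc}(\mathcal{X})+\beta\,\mathtt{bcirc}(\mathcal{Y})$.

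For item~1, I would start from the definition of the T-product in Definition~\ref{m11}, which gives $\mathtt{unfold}(\mathcal{X}*\mathcal{Y})=\mathtt{bcirc}(\mathcal{X})\,\mathtt{unfold}(\mathcal{Y})$; this already matches the first block columns of $\mathtt{bcirc}(\mathcal{X}*\mathcal{Y})$ and of $\mathtt{bcirc}(\mathcal{X})\,\mathtt{bcirc}(\mathcal{Y})$. The main obstacle is upgrading this to the full matrix identity, i.e.\ showing that the product of two block-circulant matrices is again block-circulant, so that agreement on the first block column forces agreement everywhere. I would do this by a direct block computation: the $(p,q)$ block of $\mathtt{bcirc}(\mathcal{X})\,\mathtt{bcirc}(\mathcal{Y})$ is $\sum_{r=1}^{n_{3}} X^{(\langle p-r\rangle)}Y^{(\langle r-q\rangle)}$, and the substitution $r=q+t$ shows it depends only on $p-q\bmod n_{3}$, so the product is block-circulant; comparing its block for a given residue with the convolution expression for the frontal slices of $\mathcal{X}*\mathcal{Y}$ read off from the first block column then shows these blocks coincide with those of $\mathtt{bcirc}(\mathcal{X}*\mathcal{Y})$. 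An alternative is to diagonalize via \eqref{mn35} and use that the T-product becomes the slicewise product $\overline{X}\,\overline{Y}$, but I would keep the elementary block computation as the primary route to avoid tracking the normalization constants hidden in the Fourier factors.

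Item~3 then follows by induction on $k$ from item~1. For the base case $k=0$ one uses $\mathcal{X}^{0}=\mathcal{I}$, the identity tensor whose first frontal slice is $I_{n_{1}}$ and whose remaining slices vanish, so that $\mathtt{bcirc}(\mathcal{X}^{0})=I_{n_{1}n_{3}}=(\mathtt{bcirc}(\mathcal{X}))^{0}$; the inductive step writes $\mathcal{X}^{k}=\mathcal{X}*\mathcal{X}^{k-1}$ and applies item~1 together with the induction hypothesis. Finally, item~2 is a direct index check: the $(p,q)$ block of $(\mathtt{bcirc}(\mathcal{X}))^{T}$ is $(X^{(\langle q-p\rangle)})^{\top}$, while that of $\mathtt{bcirc}(\mathcal{X}^{\top})$ is $(\mathcal{X}^{\top})^{(\langle p-q\rangle)}$. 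Using the definition of the tensor transpose, namely $(\mathcal{X}^{\top})^{(1)}=(X^{(1)})^{\top}$ and $(\mathcal{X}^{\top})^{(k)}=(X^{(n_{3}+2-k)})^{\top}$ for $k\ge 2$, the remaining step is to verify the elementary index identities $\langle q-p\rangle=\langle p-q\rangle=1$ when $p\equiv q \pmod{n_{3}}$ and $\langle q-p\rangle=n_{3}+2-\langle p-q\rangle$ otherwise, which makes the two blocks agree and completes the proof.
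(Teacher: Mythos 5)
Your proof is correct. Note, however, that the paper itself offers no proof of this lemma --- it is imported wholesale from \cite{zheng2021t} and \cite{lund2020tensor} --- so there is no in-paper argument to compare against; your write-up fills a gap rather than diverging from one. Your route is the elementary one: index the $(p,q)$ block of any $\mathtt{bcirc}(\cdot)$ by the residue $\langle p-q\rangle$, get linearity and the transpose rule by direct block bookkeeping (the identity $\langle q-p\rangle = n_3+2-\langle p-q\rangle$ for $p\not\equiv q$ is exactly what matches the slice-reversal in the tensor transpose), establish the product rule by showing a product of block-circulants is block-circulant and then matching first block columns via $\mathtt{unfold}(\mathcal{X}*\mathcal{Y})=\mathtt{bcirc}(\mathcal{X})\,\mathtt{unfold}(\mathcal{Y})$, and induct for powers. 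This is self-contained and avoids the normalization bookkeeping of the alternative Fourier route via \eqref{mn35}, which is the proof one typically finds in the cited sources (there the product rule is immediate because $F_{n_3}\otimes I$ simultaneously block-diagonalizes all block-circulants, and the transpose rule follows from $\overline{X^\top}^{(i)}=(\overline{X}^{(i)})^{*}$). The only caveats are cosmetic: in item~3 you should say explicitly that powers require $n_1=n_2$, and in the base case $(\mathtt{bcirc}(\mathcal{X}))^{0}$ is $I_{n_1 n_3}$ only under the usual convention $A^0=I$; neither affects correctness.
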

\begin{defn}\cite{cao2023some}
The identity tensor $\mathcal{I} \in \mathbb{R}^{n \times n \times n_3}$ is a tensor whose first frontal slice is the $n \times n$ identity matrix, and all other frontal slices are zero matrices.   
\end{defn}
\begin{defn}\cite{cao2023some}
A tensor $\mathcal{Q} \in \mathbb{R}^{n \times n \times n_3}$ is orthogonal if:
\[
\mathcal{Q}^\top * \mathcal{Q} = \mathcal{Q} * \mathcal{Q}^\top = \mathcal{I}.
\]
\end{defn}
From Lemma \ref{mn2}, it is obvious that the matrix $\mathtt{bcirc}$ associated with an orthogonal tensor is an orthogonal matrix. 

\begin{defn}\cite{cao2023some}, \cite{ju2024geometric}
The T-symmetric tensor $ \mathcal{A} \in \Bbb R^{n \times n \times n_{3}}$ is called  T-positive semideﬁnite tensor if 
\[
\big< \mathcal{X}, \mathcal{A}*\mathcal{X} \big> \geq 0, \quad \forall \mathcal{X} \in \Bbb R^{n \times 1 \times n_{3}}, \quad \mathcal{X}\neq \mathcal{O}.
\]
It can be shown that $\mathcal{A}*\mathcal{A}^{\top}$ is a T-positive semideﬁnite tensor \cite{ju2024geometric}.
\end{defn}
\begin{lem}\cite{ju2024geometric}\label{mn8}
Let $ \mathcal{X} \in \Bbb R^{n \times n \times n_{3}} $. A T-symmetric tensor $\mathcal{X}$ is T-positive semidefinite if and only if $ \mathtt{bcirc}(\mathcal{X}) $ is a symmetric positive semidefinite matrix.
\end{lem}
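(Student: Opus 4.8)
The plan is to reduce the tensor condition to an ordinary matrix quadratic form, exploiting the fact that $\mathtt{unfold}$ is a linear bijection between single-column lateral slices and plain vectors, and then to invoke the T-symmetry hypothesis to match it with symmetric positive semidefiniteness of $\mathtt{bcirc}(\mathcal{X})$.

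First I would rewrite the defining inner product of the T-positive semidefinite notion. For a lateral slice $\mathcal{Y}\in\mathbb{R}^{n\times 1\times n_{3}}$ set $v=\mathtt{unfold}(\mathcal{Y})\in\mathbb{R}^{n n_{3}}$. Because $\mathcal{Y}$ has a single column, the tensor inner product collapses to the Euclidean dot product of the unfoldings, i.e. $\langle \mathcal{Y},\mathcal{Z}\rangle=\mathtt{unfold}(\mathcal{Y})^{\top}\mathtt{unfold}(\mathcal{Z})$ for any $\mathcal{Z}$ of the same format. Applying this with $\mathcal{Z}=\mathcal{X}*\mathcal{Y}$, and using Definition \ref{m11} in the form $\mathtt{unfold}(\mathcal{X}*\mathcal{Y})=\mathtt{bcirc}(\mathcal{X})\,\mathtt{unfold}(\mathcal{Y})$ (which follows since $\mathtt{fold}$ and $\mathtt{unfold}$ are inverse), I obtain
\[
\langle \mathcal{Y},\mathcal{X}*\mathcal{Y}\rangle = v^{\top}\mathtt{bcirc}(\mathcal{X})\,v .
\]

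Next I would note that $\mathtt{unfold}\colon\mathbb{R}^{n\times 1\times n_{3}}\to\mathbb{R}^{n n_{3}}$ is a linear bijection with inverse $\mathtt{fold}$, so as $\mathcal{Y}$ ranges over all nonzero lateral slices, $v$ ranges over all nonzero vectors of $\mathbb{R}^{n n_{3}}$; the value at $v=0$ is trivially $0$, so it imposes no extra constraint. Hence T-positive semidefiniteness of $\mathcal{X}$ is \emph{exactly} the statement that $v^{\top}\mathtt{bcirc}(\mathcal{X})\,v\ge 0$ for every $v\in\mathbb{R}^{n n_{3}}$. To finish, I would use the hypothesis that $\mathcal{X}$ is T-symmetric, $\mathcal{X}^{\top}=\mathcal{X}$, together with Lemma \ref{mn2} (giving $\mathtt{bcirc}(\mathcal{X}^{\top})=(\mathtt{bcirc}(\mathcal{X}))^{\top}$), to conclude $(\mathtt{bcirc}(\mathcal{X}))^{\top}=\mathtt{bcirc}(\mathcal{X})$, so that $\mathtt{bcirc}(\mathcal{X})$ is symmetric. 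For a symmetric matrix the condition $v^{\top}Mv\ge 0$ for all $v$ is by definition positive semidefiniteness, which settles both implications at once.

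The only delicate point is the very first identity: one must verify carefully that the tensor inner product $\langle\cdot,\cdot\rangle$ genuinely restricts to the standard dot product of unfoldings on single-column slices, and that $\mathtt{unfold}(\mathcal{X}*\mathcal{Y})=\mathtt{bcirc}(\mathcal{X})\mathtt{unfold}(\mathcal{Y})$ holds verbatim. Once this bookkeeping is in place the equivalence is immediate. I would also emphasize that the T-symmetry hypothesis is genuinely needed, since without it the inequality $v^{\top}\mathtt{bcirc}(\mathcal{X})v\ge 0$ would only control the symmetric part of $\mathtt{bcirc}(\mathcal{X})$ rather than the matrix itself.
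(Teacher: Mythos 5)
The paper does not prove this lemma at all: it is imported verbatim from \cite{ju2024geometric} as a cited result, so there is no in-paper argument to compare against. Your proof is correct and is the natural one: the identity $\langle \mathcal{Y},\mathcal{X}*\mathcal{Y}\rangle = \mathtt{unfold}(\mathcal{Y})^{\top}\mathtt{bcirc}(\mathcal{X})\,\mathtt{unfold}(\mathcal{Y})$ follows directly from the paper's Definition~\ref{m11} of the T-product together with the fact that the entrywise inner product of two tensors equals the dot product of their unfoldings, the bijectivity of $\mathtt{unfold}$ on $n\times 1\times n_{3}$ slices makes the two quadratic-form conditions coincide, and Lemma~\ref{mn2} supplies $\mathtt{bcirc}(\mathcal{X}^{\top})=(\mathtt{bcirc}(\mathcal{X}))^{\top}$ so that T-symmetry yields symmetry of the block-circulant matrix. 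Your closing remark about needing symmetry to pass from nonnegativity of the quadratic form to positive semidefiniteness of the matrix itself is also well placed, since the paper's definition of T-positive semidefinite already builds T-symmetry into the hypothesis.
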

The analysis of T-positive semideﬁnite tensor demands the introduction of eigenvalues of a tensor under T-product.
\begin{defn}\cite{zheng2021t}\label{mn45}
Let $ \mathcal{A} \in \Bbb C^{n \times n \times n_{3}}$, $ \mathcal{X} \in \Bbb C^{n \times 1 \times n_{3}}$, and $\mathcal{X}\neq \mathcal{O}$. If $\lambda$ and $ \mathcal{X}$ satisfies 
\[
\mathcal{A} * \mathcal{X}=\lambda \mathcal{X},
\]
then $\lambda$ is called a T-eigenvalue of $\mathcal{A}$ and $\mathcal{X}$ is a T-eigenvector of $\mathcal{A}$ associated to $\lambda$. Clearly, for each T-eigenvalue $\lambda$ of $\mathcal{A} $,
\[
\mathtt{bcirc}(\mathcal{A}) \mathtt{unfold}(\mathcal{X})= \lambda \, \mathtt{unfold}(\mathcal{X}),
\]
and this shows that T-eigenvalues of $\mathcal{A} $ are the eigenvalues of the matrix $\mathtt{bcirc}(\mathcal{A})$.
\end{defn}
\begin{lem}\label{mn77}\cite{zheng2021t}
The T-symmetric tensor $ \mathcal{X} \in \Bbb R^{n \times n\times n_{3}} $ is T-positive semidefinite (positive definite) if and only if the T-eigenvalues of $\mathcal{X}$ are non-negative (positive).
\end{lem}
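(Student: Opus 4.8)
The plan is to reduce this tensor statement to a classical matrix fact by passing through the block-circulant representation, exploiting the two bridges already established in the excerpt. On one side, Lemma \ref{mn8} links T-positive semidefiniteness of a T-symmetric tensor to positive semidefiniteness of $\mathtt{bcirc}(\mathcal{X})$; on the other, Definition \ref{mn45} identifies the T-eigenvalues of $\mathcal{X}$ with the ordinary eigenvalues of $\mathtt{bcirc}(\mathcal{X})$. Chaining these two identifications with the spectral characterization of symmetric matrices will give the result directly.

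First I would record that, because $\mathcal{X}$ is T-symmetric (i.e. $\mathcal{X}^\top=\mathcal{X}$), the identity $\mathtt{bcirc}(\mathcal{X}^{\top})=(\mathtt{bcirc}(\mathcal{X}))^{\top}$ from Lemma \ref{mn2} forces $\mathtt{bcirc}(\mathcal{X})$ to be a real symmetric matrix, so the spectral theorem applies to it. Next I would invoke the standard linear-algebra fact that a real symmetric matrix $M$ is positive semidefinite if and only if all of its eigenvalues are non-negative, and positive definite if and only if all of its eigenvalues are strictly positive. Applying this to $M=\mathtt{bcirc}(\mathcal{X})$ and combining it with Lemma \ref{mn8} and Definition \ref{mn45} yields both directions at once: $\mathcal{X}$ is T-positive semidefinite $\Leftrightarrow$ $\mathtt{bcirc}(\mathcal{X})$ is positive semidefinite $\Leftrightarrow$ every eigenvalue of $\mathtt{bcirc}(\mathcal{X})$ is non-negative $\Leftrightarrow$ every T-eigenvalue of $\mathcal{X}$ is non-negative. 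The positive-definite case is verbatim the same argument with ``non-negative'' replaced by ``positive'' throughout.

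I do not anticipate a genuine obstacle, since the cited results do the heavy lifting; the only point that deserves care is to confirm that passing to $\mathtt{bcirc}(\mathcal{X})$ neither loses nor spuriously creates spectrum. Concretely, the map $\mathtt{unfold}$ sending $\mathcal{X}\in\mathbb{C}^{n\times 1\times n_3}$ to a vector in $\mathbb{C}^{nn_3}$ is a bijection, and under it the eigen-equation $\mathcal{X}*\mathcal{V}=\lambda\mathcal{V}$ is \emph{equivalent} to $\mathtt{bcirc}(\mathcal{X})\,\mathtt{unfold}(\mathcal{V})=\lambda\,\mathtt{unfold}(\mathcal{V})$. This equivalence guarantees that the set of T-eigenvalues is exactly the full spectrum of $\mathtt{bcirc}(\mathcal{X})$, so that ``all T-eigenvalues are non-negative'' is truly the same condition as ``all eigenvalues of $\mathtt{bcirc}(\mathcal{X})$ are non-negative,'' rather than a condition on a mere subset of them. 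Once this bijection is made explicit, the chain of equivalences above closes without gaps.
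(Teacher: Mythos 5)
Your argument is correct, and in fact the paper offers no proof of this lemma at all: it is simply quoted from \cite{zheng2021t}, so there is nothing internal to compare against. Your route --- chaining Lemma \ref{mn8} (T-positive semidefinite $\Leftrightarrow$ $\mathtt{bcirc}(\mathcal{X})$ positive semidefinite), the symmetry of $\mathtt{bcirc}(\mathcal{X})$ via Lemma \ref{mn2}, and the identification of T-eigenvalues with the spectrum of $\mathtt{bcirc}(\mathcal{X})$ --- is the natural self-contained derivation, and you are right to flag the one point that actually needs checking, namely that $\mathtt{unfold}$ is a bijection under which $\mathcal{X}*\mathcal{V}=\lambda\mathcal{V}$ and $\mathtt{bcirc}(\mathcal{X})\,\mathtt{unfold}(\mathcal{V})=\lambda\,\mathtt{unfold}(\mathcal{V})$ are equivalent; Definition \ref{mn45} as written only gives the forward inclusion of the spectra, so making the converse explicit is what ensures you are constraining \emph{all} eigenvalues of $\mathtt{bcirc}(\mathcal{X})$ and not just a subset. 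The only loose end is the parenthetical positive-definite half: Lemma \ref{mn8} is stated only for the semidefinite case (and the paper never formally defines T-positive definiteness), so ``verbatim the same argument'' tacitly assumes the positive-definite analogue of Lemma \ref{mn8}. That analogue does hold by the same $\mathtt{unfold}$ bijection (with $\mathcal{X}\neq\mathcal{O}$ corresponding to a nonzero vector), but you should state it rather than inherit it silently.
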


The T-SVD is a generalization of the matrix Singular Value Decomposition (SVD) to third-order tensors. It is based on the t-product, which is a specific type of tensor-tensor product that operates on third-order tensors. 
Given a tensor $\mathcal{X} \in \mathbb{R}^{n_1 \times n_2 \times n_3}$, its T-SVD is given by:
\[
\mathcal{X} = \mathcal{U} * \mathcal{S} * \mathcal{V}^\top,
\]
where $\mathcal{U} \in \mathbb{R}^{n_1 \times n_1 \times n_3}$ is an orthogonal tensor, $\mathcal{S} \in \mathbb{R}^{n_1 \times n_2 \times n_3}$ is an f-diagonal tensor (i.e., each frontal slice is a diagonal matrix), $\mathcal{V} \in \mathbb{R}^{n_2 \times n_2 \times n_3}$ is an orthogonal tensor. The number of nonzero tubes in the middle tensor $\mathcal{S}$ is called the tubal rank of $\mathcal{X}$ and is denoted by $T-rank$. The nonzero tubes of the tensor $\mathcal{S}$, are called singular tubes. The truncated T-SVD of tubal rank $R$ for the tensor ${\mathcal X}$ is 
\begin{eqnarray*}
{\mathcal X}\approx {\mathcal U}_R * {\mathcal S}_R* {\mathcal V}_R^\top,    
\end{eqnarray*}
where ${\mathcal U}_R={\mathcal U}(:,1:R,:)\in\mathbb{R}^{n_1\times R\times n_3},\,{\mathcal V}_R={\mathcal V}(:,1:R,:)\in\mathbb{R}^{n_2\times R\times n_3},$ and ${\mathcal S}_R={\mathcal S}(1:R,1:R,:)\in\mathbb{R}^{R \times R\times n_3}$. The T-SVD can be computed by performing the matrix SVD on each frontal slice of $\mathcal{A}$ in the Fourier domain and then transforming back to the spatial domain. Similarly, the tensor QR (T-QR) of a tensor $\mathcal{X}\in\mathbb{R}^{n_1\times n_2\times n_3}$ can be defined similarly as the following representation:
\begin{eqnarray*}
\mathcal{X}=\mathcal{Q}*\mathcal{R},    
\end{eqnarray*}
where $\mathcal{Q}\in\mathbb{R}^{n_1\times n_1\times n_3}$ is an orthogonal tensor and the tensor $\mathcal{R}\in\mathbb{R}^{n_1\times n_2\times n_3}$  is an f-upper triangular tensor (each frontal slice is an upper triangular matrix). The process of T-QR decomposition of a tensor is presented in Algorithm \ref{ALG:TQR}.
\begin{defn}\cite{jin2023moore}
Let $\mathcal{A} \in \Bbb R^{n_{1}\times n_{2} \times n_{3}}$. If there exists a tensor $ \mathcal{X} \in \Bbb R^{n_{2}\times n_{1} \times n_{3}} $ such that
\begin{eqnarray*}
& &\mathcal{A}*\mathcal{X}*\mathcal{A}=\mathcal{A}, \quad \mathcal{X}*\mathcal{A}*\mathcal{X}=\mathcal{X}\\
& &(\mathcal{A}*\mathcal{X})^{\top}=\mathcal{A}*\mathcal{X}, \quad (\mathcal{X}*\mathcal{A})^{\top}=\mathcal{X}*\mathcal{A},
\end{eqnarray*}
then $ \mathcal{X} $ is called the Moore–Penrose (MP) inverse of the tensor $ \mathcal{A} $ and is denoted by $\mathcal{A}^{\dagger}$.
\end{defn}
The MP inverse of a tensor can be computed in the Fourier domain and summarized in Algorithm \ref{ALG:mp}. Similar to the matrix case, there is an explicit formula for the MP of a tensor. More precisely, for a given tensor $\mathcal{A}\in\mathbb{R}^{n_1\times n_2\times n_3}$, and its T-SVD as $\mathcal{A}=\mathcal{U}*\mathcal{S}*\mathcal{V}^T$, we have $\mathcal{A}^\dagger=\mathcal{V}*\mathcal{S}^\dagger*\mathcal{U}^\top$, where $\mathcal{S}^\dagger$ is obtained by transposing the tensor $\mathcal{S}$ and replacing the nonzero tubes with their inverses. From these facts, it is now easy to see that
\begin{eqnarray}\label{mn41}
\mathcal{A}*\mathcal{A}^\dagger=\mathcal{U}*\mathcal{U}^\top, \quad
\mathcal{A}^\dagger*\mathcal{A}=\mathcal{V}*\mathcal{V}^\top.
\end{eqnarray}
\RestyleAlgo{ruled}
\LinesNumbered
\begin{algorithm}
{\small
\SetKwInOut{Input}{Input}
\SetKwInOut{Output}{Output}\Input{Two data tensors $\mathcal{X} \in {\mathbb{R}^{{n_1} \times {n_2} \times {n_3}}},\,\,{\mathcal Y} \in {\mathbb{R}^{{n_2} \times {n_4} \times {n_3}}}$} 
\Output{T-product ${\mathcal C} = {\mathcal X} * {\mathcal Y}\in\mathcal{R}^{n_1\times n_4\times n_3}$}
\caption{T-product in the Fourier domain \cite{kilmer2011factorization}}\label{ALG:TP}
      {
      $\overline{{\mathcal X}} = \mathtt{fft}\left( {{\mathcal X},[],3} \right)$;\\
      $\overline{{\mathcal Y}} = \mathtt{fft}\left( {{\mathcal Y},[],3} \right)$;\\
\For{$i=1,2,\ldots,\lceil \frac{n_3+1}{2}\rceil$}
{                        
$\overline{{\mathcal C}}\left( {:,:,i} \right) = \overline{{\mathcal X}}\left( {:,:,i} \right)\,\overline{{\mathcal Y}}\left( {:,:,i} \right)$;\\
}
\For{$i=\lceil\frac{n_3+1}{2}\rceil+1\ldots,n_3$}{
$\overline{{\mathcal C}}\left( {:,:,i} \right)={\rm conj}(\overline{{\mathcal C}}\left( {:,:,n_3-i+2} \right))$;
}
${\mathcal  C} = \mathtt{ifft}\left( {\overline{{\mathcal  C}},[],3} \right)$;   
       	} 
        }
\end{algorithm}

\RestyleAlgo{ruled}
\LinesNumbered
\begin{algorithm}
\SetKwInOut{Input}{Input}
\SetKwInOut{Output}{Output}\Input{The data tensor ${\mathcal X} \in {\mathbb{R}^{{n_1} \times {n_2} \times {n_3}}}$} 
\Output{The T-QR computation $\mathcal X=\mathcal Q*\mathcal R$.}
\caption{Fast T-QR decomposition of the tensor ${\mathcal  X}$}\label{ALG:TQR}
      {
      $\overline{{\mathcal X}} = \mathtt{fft}\left( {{\mathcal X},[],3} \right)$;\\
\For{$i=1,2,\ldots,\lceil \frac{n_3+1}{2}\rceil$}
{                        
$[\overline{{\mathcal Q}}\left( {:,:,i} \right),\overline{\R}(:,:,i)] = {\rm qr}\,(\overline{\mathcal X}(:,:,i),0)$;\\
}
\For{$i=\lceil\frac{n_3+1}{2}\rceil+1\ldots,n_3$}{
$\overline{{\mathcal Q}}\left( {:,:,i} \right)={\rm conj}(\overline{{\mathcal Q}}\left( {:,:,n_3-i+2} \right))$;\\
$\overline{{\mathcal R}}\left( {:,:,i} \right)={\rm conj}(\overline{{\mathcal R}}\left( {:,:,n_3-i+2} \right))$;
}
${\mathcal Q}= \mathtt{ifft}\left( {\overline{{\mathcal Q}},[],3} \right)$;\\
${\mathcal R}= \mathtt{ifft}\left( {\overline{{\mathcal R}},[],3} \right)$; 
       	}       	
\end{algorithm}

\RestyleAlgo{ruled}
\LinesNumbered
\begin{algorithm}
\SetKwInOut{Input}{Input}
\SetKwInOut{Output}{Output}\Input{The data tensor ${\mathcal X} \in {\mathbb{R}^{{n_1} \times {n_2} \times {n_3}}}$} 
\Output{Moore-Penrose pseudoinverse ${\mathcal X}^{\dagger}\in\mathbb{R}^{n_2\times n_1\times n_3}$}
\caption{Fast Moore-Penrose pseudoinverse computation of the tensor ${\mathcal X}$}\label{ALG:mp}
      {
      $\overline{{\mathcal X}} = \mathtt{ fft}\left( {\mathcal X},[],3 \right)$;\\
\For{$i=1,2,\ldots,\lceil \frac{n_3+1}{2}\rceil$}
{                        
$\overline{{\mathcal C}}\left( {:,:,i} \right) = \mathtt{pinv}\,(\overline{\mathcal X}(:,:,i))$;\\
}
\For{$i=\lceil\frac{n_3+1}{2}\rceil+1\ldots,n_3$}{
$\overline{{\mathcal C}}\left( {:,:,i} \right)={\rm conj}(\overline{{\mathcal C}}\left( {:,:,n_3-i+2} \right))$;
}
${\mathcal X}^{\dagger} = \mathtt{ifft}\left( {\overline{{\mathcal C}},[],3} \right)$;   
       	}       	
\end{algorithm}

\section{Proposed randomized block Krylov method for approximate truncated T-SVD}\label{Sec:prop}
The randomized block Krylov subspace algorithm is an efficient method for approximating large-scale matrix computations, including eigenvalue problems and low-rank matrix approximations. It combines randomization techniques with the power of Krylov subspace methods to achieve high accuracy with reduced computational cost. In the case of matrices as second-order tensors, it is known that the convergence rate of pure Krylov methods depends on the spectral gap. Slow decay (small gap) leads to slow convergence. Randomized sketching with power iterations creates a matrix \((XX^T)^q X\) whose singular values decay faster, effectively enhancing the gap.
Randomized methods provide strong, non-asymptotic probabilistic error bounds. For a target rank \(k\), the approximation error is guaranteed (with high probability) to be within a small factor of the optimal error. Krylov refinement tightens these bounds.

Although a classical randomized algorithm applies power iterations $q\geq 1$ to enhance the decay of singular tubes, it does not use the results from each step of the iteration. Since it only uses the final result, $(\mathcal{X}*\mathcal{X}^\top)^q*\mathcal{X}*\mathcal{B}$, the method generally cannot closely approximate the largest $k$ singular tubes. However, the block randomized algorithm captures a much richer spectral range, leading to faster, super-linear convergence and higher accuracy for the same number of tensor operations. Let us discuss this precisely. The classical randomized algorithm with the power iteration technique for truncated T-SVD is presented in Algorithm \ref{ALg_1}. As can be seen, it uses the last iteration $(\mathcal{X}*\mathcal{X}^\top)^q*\mathcal{X}*\mathcal{B}$ of the power iteration loop for computing an orthonormal basis of the tensor range in Line 7. The proposed randomized algorithm looks for a better matrix to capture the range of the matrix and do this, it uses all iterations $(\mathcal{X}*\mathcal{X}^\top)^i*\mathcal{X}*\mathcal{B},\,\,i=1,2,\ldots,q$, which leads to better capturing the range of the tensor and better approximation results. This process is summarized in Algorithm \ref{ALg_2}. The main difference between the proposed randomized algorithm and the one proposed in \cite{zhang2018randomized} lies in how they compute an orthonormal basis for the range of a tensor to be used in the projection stage within the randomization framework. We extensively examine this idea with detailed theoretical analyses. 

It is worth noting that the primary trade-off is between \textbf{accuracy} and \textbf{computational cost per iteration}:
\begin{itemize}
    \item \textbf{Classical Randomized (with Power Iteration):} Lower cost per iteration. However, it requires more iterations (higher q) to achieve high accuracy, especially for complex spectra, resulting in a high total cost for accurate approximations.
    \item \textbf{Proposed Block Krylov:} Higher cost per iteration due to the orthogonalization of a larger block Krylov basis. The significant advantage is that it achieves much higher accuracy \textit{for the same number of iterations ``q''}. Therefore, to achieve a high target accuracy, the Block Krylov method often has a \textit{lower total computational cost} as it requires far fewer iterations (``q''). This trade-off is favorable for applications requiring high-precision approximations.
\end{itemize}

\RestyleAlgo{ruled}
\begin{algorithm}
{\smaller
\LinesNumbered
\SetKwInOut{Input}{Input}
\SetKwInOut{Output}{Output}  \Input{A data tensor ${\mathcal X}\in\mathbb{R}^{n_1\times n_2\times n_3}$; a tubal tensor rank $R$; Oversampling $P$ and the power iteration $q$.}  \Output{Truncated SVD: ${\mathcal X}\cong {\mathcal U}*{\mathcal S}*{\mathcal V}^\top$}
\caption{Classical randomized subspace method for computation of the Truncated SVD \cite{zhang2018randomized}}\label{ALg_1}
${\mathcal B}={\rm randn}(n_2,P+R,n_3)$;\\
${\mathcal Y}_{0}={\mathcal X}*{\mathcal B},$;\\
\For{$i=1,2,\ldots,q$}{
$\widetilde{{\mathcal Y}}_{i}=\mathcal{X}^T*\mathcal{Y}_{i-1}$;\\
${{\mathcal Y}}_{i}=\mathcal{X}*\widetilde{\mathcal{Y}}_{i}$
}
$[{\mathcal Q}_q,\sim] = {{\rm T-QR}}({\mathcal K})$;\\
$\mathcal{B} = \mathcal{Q}_q^T*\mathcal{X}$;\\
$[\widehat{\mathcal U},{\mathcal S},{\mathcal V}] = {\rm Truncated }$ $\,{\rm T-SVD}({\mathcal B},R)$;\\
${\mathcal U}={\mathcal Q}*\widehat{\mathcal U}$;
}
\end{algorithm}

\RestyleAlgo{ruled}
\begin{algorithm}
{\smaller
\LinesNumbered
\SetKwInOut{Input}{Input}
\SetKwInOut{Output}{Output}  \Input{A data tensor ${\mathcal X}\in\mathbb{R}^{n_1\times n_2\times n_3}$; a tubal tensor rank $R$; Oversampling $P$ and the power iteration $q$.}  \Output{Truncated SVD: ${\mathcal X}\cong \widehat{\mathcal U}_R*\widehat{\mathcal S}_R*\widehat{\mathcal V}^T_R$}
\caption{Proposed randomized block Krylov method for the Truncated SVD}\label{ALg_2}
${\mathcal B}={\rm randn}(n_1,P+R,n_3)$;\\
$\mathcal{K}_0 = {\mathcal X}*{\mathcal B}$;\\
\For{$i=1,2,\ldots,q$}{
${\mathcal K}_i=\mathcal{X}*\mathcal{X}^{\top}*\mathcal{K}_{i-1}$;\\
}
Generate $\mathcal{K}=[\mathcal{K}_0,\mathcal{K}_1,\ldots,\mathcal{K}_q]$;\\
$[{\mathcal Q},\sim] = {{\rm T-QR}}({\mathcal K})$;\\
$\mathcal{C}=\mathcal{Q}^T*\mathcal{X}$;\\
$[\mathcal{U}_c,\mathcal{S}_c, \mathcal{V}_c]=T-SVD(\mathcal{C})$;\\
$\widehat{\mathcal U}=\mathcal{Q}*\mathcal{U}_c$;\\
$\widehat{\mathcal U}_R=\widehat{\mathcal U}(:,1:R,:),\, \widehat{\mathcal S}_R=\mathcal{S}_c(1:R,1:R,:) ,\,\widehat{\mathcal V}_R=\mathcal{V}_c(:,1:R,:).$
}
\end{algorithm} 

\section{Theoretical results}\label{Sec:theo}
In this section, we present a detailed theoretical analysis of the proposed randomized subspace block Krylov method. A key point that needs to be highlighted is that the proposed algorithm indeed applies the randomized block Krylov method on each frontal slice of the tensor in the Fourier domain. This facilitates the derivation of the upper bound approximation obtained by the proposed algorithm.  

In the following, we present two definitions T-projection and T-orthogonal projection tensors.
\begin{defn}
A tensor $\mathcal{P} \in \mathbb{R}^{n \times n \times n_3}$ is called T-projection if $\mathcal{P}*\mathcal{P}=\mathcal{P}$.
\end{defn}
\begin{defn}
A tensor $\mathcal{P} \in \mathbb{R}^{n \times n \times n_3}$ is called T-orthogonal projection if it is T-symmetric and T-projection.
\end{defn}
From Lemma \ref{mn2} it can be observed that if $ \mathcal{A} \in \mathbb{R}^{n \times n \times n_3} $ is a T-orthogonal projection tensor, then $\mathtt{bcirc}(\mathcal{A})$ is an orthogonal projection matrix.

The following lemma shows that the spectral norm of a tensor under the T-product is orthogonally invariant.
\begin{lem}\label{mn15}
Let $ \mathcal{A} \in \Bbb R^{n_{1} \times n_{2} \times n_{3}}$, $ \mathcal{U} $ and $\mathcal{ V }$ be orthogonal tensors of dimensions $n_1 \times n_1 \times n_3$ and $n_2 \times n_2 \times n_3$ respectively. Then
\[
\parallel \mathcal{U} * \mathcal{A} *\mathcal{ V } \parallel_{2}= \parallel \mathcal{A} \parallel_{2}.
\]
\end{lem}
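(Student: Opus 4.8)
The plan is to reduce the tensor identity to the corresponding statement for block circulant matrices, where orthogonal invariance of the matrix spectral norm is classical. First I would invoke Definition \ref{mn3}, which defines $\|\mathcal{X}\|_2 = \|\mathtt{bcirc}(\mathcal{X})\|_2$, to rewrite the left-hand side as
\[
\|\mathcal{U}*\mathcal{A}*\mathcal{V}\|_2 = \|\mathtt{bcirc}(\mathcal{U}*\mathcal{A}*\mathcal{V})\|_2.
\]
Next, I would apply the multiplicativity of the $\mathtt{bcirc}$ operator recorded in Lemma \ref{mn2}, namely $\mathtt{bcirc}(\mathcal{X}*\mathcal{Y}) = \mathtt{bcirc}(\mathcal{X})\,\mathtt{bcirc}(\mathcal{Y})$, used twice, to factor
\[
\mathtt{bcirc}(\mathcal{U}*\mathcal{A}*\mathcal{V}) = \mathtt{bcirc}(\mathcal{U})\,\mathtt{bcirc}(\mathcal{A})\,\mathtt{bcirc}(\mathcal{V}).
\]

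The key structural input is the observation, already noted after the definition of an orthogonal tensor, that the block circulant matrix of an orthogonal tensor is an orthogonal matrix. Concretely, the orthogonality relations $\mathcal{U}^\top*\mathcal{U}=\mathcal{U}*\mathcal{U}^\top=\mathcal{I}$ translate, via the facts $\mathtt{bcirc}(\mathcal{U}^\top)=\mathtt{bcirc}(\mathcal{U})^\top$ and $\mathtt{bcirc}(\mathcal{I})=I$ from Lemma \ref{mn2}, into $\mathtt{bcirc}(\mathcal{U})^\top\mathtt{bcirc}(\mathcal{U})=\mathtt{bcirc}(\mathcal{U})\mathtt{bcirc}(\mathcal{U})^\top = I$, so that $Q_U:=\mathtt{bcirc}(\mathcal{U})$ is an orthogonal matrix of size $n_1n_3\times n_1n_3$, and likewise $Q_V:=\mathtt{bcirc}(\mathcal{V})$ is orthogonal of size $n_2n_3\times n_2n_3$. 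Finally I would invoke the standard orthogonal invariance of the matrix $2$-norm, $\|Q_U M Q_V\|_2 = \|M\|_2$ for orthogonal $Q_U,Q_V$, applied to the rectangular matrix $M=\mathtt{bcirc}(\mathcal{A})\in\mathbb{R}^{n_1n_3\times n_2n_3}$, and then read the result back through Definition \ref{mn3} as $\|\mathcal{A}\|_2$.

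The argument is essentially a dictionary translation between the tensor and matrix settings and involves no serious obstacle. The only point requiring care is bookkeeping of the matrix sizes: one must confirm that $Q_U$ and $Q_V$ are square orthogonal matrices of exactly the dimensions that make them act, respectively, on the left and right of $\mathtt{bcirc}(\mathcal{A})$, so that the orthogonal-invariance step is legitimate for a nonsquare $M$. This is guaranteed by the dimension conventions for the T-product together with Lemma \ref{mn2}, which is why I would state the orthogonality of $Q_U$ and $Q_V$ explicitly before applying the matrix norm identity.
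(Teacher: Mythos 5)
Your proposal is correct and follows essentially the same route as the paper's own proof: both reduce the claim via $\|\mathcal{X}\|_2=\|\mathtt{bcirc}(\mathcal{X})\|_2$, multiplicativity of $\mathtt{bcirc}$, the fact that $\mathtt{bcirc}$ of an orthogonal tensor is an orthogonal matrix, and unitary invariance of the matrix spectral norm. Your version merely spells out the orthogonality of $\mathtt{bcirc}(\mathcal{U})$ and $\mathtt{bcirc}(\mathcal{V})$ and the dimension bookkeeping in more detail than the paper does.
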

\begin{proof}
Since the spectral norm on matrices is unitarily invariant and $ \mathtt{bcirc} $ of an orthogonal tensor is an orthogonal matrix, we have
\[
\parallel \mathcal{U} * \mathcal{A} *\mathcal{ V }\parallel_{2} = \parallel \mathtt{bcirc}(\mathcal{U}) \mathtt{bcirc}(\mathcal{A})\mathtt{bcirc}(\mathcal{V}) \parallel_{2}=\parallel \mathtt{bcirc}(\mathcal{A}) \parallel_{2}= \parallel \mathcal{A}\parallel_{2}.
\]
\end{proof}
\begin{lem}\label{mn6}
Let  $\mathcal{A} \in \Bbb R^{n_{1} \times n_{2} \times n_{3}} $. Then, 
\[
\parallel \mathcal{A} \parallel_{2}^{2}=\parallel \mathcal{A}^\top * \mathcal{A} \parallel_{2}=\parallel \mathcal{A}* \mathcal{A}^\top \parallel_{2}.
\]
\end{lem}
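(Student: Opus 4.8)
The plan is to transport the entire identity into the block-circulant matrix domain, where by Definition \ref{mn3} the tensor spectral norm is literally defined as $\|\mathcal{A}\|_2 = \|\mathtt{bcirc}(\mathcal{A})\|_2$, and then to invoke the standard matrix fact $\|M^\top M\|_2 = \|M M^\top\|_2 = \|M\|_2^2$. First I would set $M := \mathtt{bcirc}(\mathcal{A})$. Using the multiplicativity of $\mathtt{bcirc}$ and its transpose rule from Lemma \ref{mn2}, namely $\mathtt{bcirc}(\mathcal{X}*\mathcal{Y}) = \mathtt{bcirc}(\mathcal{X})\,\mathtt{bcirc}(\mathcal{Y})$ and $\mathtt{bcirc}(\mathcal{X}^\top) = (\mathtt{bcirc}(\mathcal{X}))^\top$, I obtain
\[
\mathtt{bcirc}(\mathcal{A}^\top*\mathcal{A}) = \mathtt{bcirc}(\mathcal{A}^\top)\,\mathtt{bcirc}(\mathcal{A}) = M^\top M, \qquad \mathtt{bcirc}(\mathcal{A}*\mathcal{A}^\top) = M M^\top.
\]
Applying Definition \ref{mn3} once more then turns the claim into the purely matricial statement $\|M^\top M\|_2 = \|M M^\top\|_2 = \|M\|_2^2$.

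The matrix fact itself is elementary. Since $M^\top M$ is symmetric positive semidefinite, its spectral norm coincides with its largest eigenvalue, and this largest eigenvalue equals the square of the largest singular value of $M$; hence $\|M^\top M\|_2 = \lambda_{\max}(M^\top M) = \sigma_{\max}(M)^2 = \|M\|_2^2$. The identical argument applied to the symmetric positive semidefinite matrix $M M^\top$, whose nonzero eigenvalues agree with those of $M^\top M$, yields $\|M M^\top\|_2 = \|M\|_2^2$. Chaining these equalities with $\|\mathcal{A}\|_2^2 = \|M\|_2^2$ closes the argument.

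There is essentially no genuine obstacle here: the entire difficulty of reasoning about the T-product spectral norm has already been absorbed into Definition \ref{mn3} and Lemma \ref{mn2}, which together let me commute $\mathtt{bcirc}$ past both the T-product and the tensor transpose and thereby collapse the tensor identity onto a classical matrix identity. The only point requiring the slightest care is to apply the transpose rule in the correct order, so that $\mathcal{A}^\top*\mathcal{A}$ is matched with $M^\top M$ rather than $M M^\top$; once the two tensor expressions are correctly identified with their matrix counterparts, the remainder is the textbook relation $\|M^\top M\|_2 = \|M\|_2^2$.
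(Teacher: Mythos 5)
Your proposal is correct and follows essentially the same route as the paper's own proof: both pass to $\mathtt{bcirc}(\mathcal{A})$ via Definition \ref{mn3} and Lemma \ref{mn2} and then invoke the matrix identity $\|M^\top M\|_2=\|M\|_2^2$. Your write-up is in fact slightly more complete, since the paper only spells out the $\mathcal{A}^\top*\mathcal{A}$ case and leaves $\mathcal{A}*\mathcal{A}^\top$ implicit.
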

\begin{proof}
From $\mathcal{A}$ is a real tensor, we deduce that the matrix $\mathtt{bcirc}(\mathcal{A})$ is a real matrix. Now, from Definition \ref{mn3} and Lemma \ref{mn2}, it follows that
\begin{eqnarray*}
\parallel \mathcal{A} \parallel_{2}^2 &=& \parallel \mathtt{bcirc}(\mathcal{A}) \parallel_{2}^2\\
&=& \parallel (\mathtt{bcirc}(\mathcal{A}))^\top  \mathtt{bcirc}(\mathcal{A}) \parallel_{2}\\
&=& \parallel \mathtt{bcirc}(\mathcal{A}^\top)  \mathtt{bcirc}(\mathcal{A}) \parallel_{2}\\
&=& \parallel \mathcal{A}^{\top}* \mathcal{A} \parallel_{2}.
\end{eqnarray*}
\end{proof}
It is not difficult to show that the summation of T-positive semidefinite tensors is a T-positive semidefinite tensor due to the linearity of T-product operator and inner product.
From [\cite{horn2012matrix}, Observation 7.1.8.], it is known that if a Hermitian matrix $A \in \mathbb{R}^{n\times n}$ is positive semideﬁnite and $C \in \mathbb{R}^{n\times n}$, then  $C^{T}AC$ is also a positive semideﬁnite matrix. The following lemma proves that the same is true for tensors under the $ T $-product.
\begin{lem}\label{mn166}
Let $ \mathcal{A} \in \Bbb R^{n\times n \times n_{3}} $ be a T-symmetric tensor and $ \mathcal{C}$ be a real-valued tensor with adequate dimension. If $\mathcal{A}$ is a T-positive semidefinite tensor, then $\mathcal{C}^{\top} * \mathcal{A} *\mathcal{C}$ is a T-positive semidefinite tensor.
\end{lem}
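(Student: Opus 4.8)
The plan is to reduce the tensor statement to the corresponding matrix fact through the block-circulant representation, exploiting that $\mathtt{bcirc}$ sends the T-product to ordinary matrix multiplication and the tensor transpose to the matrix transpose (Lemma \ref{mn2}), and that it characterizes T-positive semidefiniteness (Lemma \ref{mn8}).

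First I would verify that $\mathcal{C}^\top * \mathcal{A} * \mathcal{C}$ is T-symmetric, a prerequisite for invoking Lemma \ref{mn8}. Using the transpose rule $(\mathcal{X}*\mathcal{Y})^\top = \mathcal{Y}^\top * \mathcal{X}^\top$ together with the hypothesis $\mathcal{A}^\top = \mathcal{A}$, one finds
\[
(\mathcal{C}^\top * \mathcal{A} * \mathcal{C})^\top = \mathcal{C}^\top * \mathcal{A}^\top * \mathcal{C} = \mathcal{C}^\top * \mathcal{A} * \mathcal{C},
\]
so T-symmetry holds.

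Next I would pass to the block-circulant domain. By the multiplicativity and transpose properties of $\mathtt{bcirc}$ from Lemma \ref{mn2},
\[
\mathtt{bcirc}(\mathcal{C}^\top * \mathcal{A} * \mathcal{C}) = (\mathtt{bcirc}(\mathcal{C}))^{T}\,\mathtt{bcirc}(\mathcal{A})\,\mathtt{bcirc}(\mathcal{C}).
\]
Since $\mathcal{A}$ is T-positive semidefinite, Lemma \ref{mn8} tells us that $\mathtt{bcirc}(\mathcal{A})$ is symmetric positive semidefinite. Setting $M = \mathtt{bcirc}(\mathcal{C})$, the right-hand side is $M^{T}\,\mathtt{bcirc}(\mathcal{A})\,M$, which is symmetric positive semidefinite by the matrix observation cited from \cite{horn2012matrix}. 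I would then apply the converse direction of Lemma \ref{mn8} to the T-symmetric tensor $\mathcal{C}^\top * \mathcal{A} * \mathcal{C}$, whose $\mathtt{bcirc}$ has just been shown to be symmetric positive semidefinite, to conclude that $\mathcal{C}^\top * \mathcal{A} * \mathcal{C}$ is itself T-positive semidefinite.

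I do not anticipate a serious obstacle, since essentially all the work is packaged in Lemmas \ref{mn2} and \ref{mn8}, which collapse the problem onto the already-known matrix statement. The only point deserving a moment of care is that the cited observation is phrased for square $C$, whereas $M = \mathtt{bcirc}(\mathcal{C})$ is rectangular when $\mathcal{C}$ is non-square; this is dispatched by the elementary quadratic-form argument $x^{T}M^{T}\,\mathtt{bcirc}(\mathcal{A})\,Mx = (Mx)^{T}\,\mathtt{bcirc}(\mathcal{A})\,(Mx) \ge 0$, so no genuine difficulty arises.
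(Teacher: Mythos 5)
Your proposal is correct and follows essentially the same route as the paper: both reduce the claim to the matrix setting via $\mathtt{bcirc}$, use Lemma \ref{mn2} to get $\mathtt{bcirc}(\mathcal{C}^{\top}*\mathcal{A}*\mathcal{C})=(\mathtt{bcirc}(\mathcal{C}))^{\top}\mathtt{bcirc}(\mathcal{A})\,\mathtt{bcirc}(\mathcal{C})$, and invoke Lemma \ref{mn8} in both directions. Your explicit verification of T-symmetry and your remark on the rectangular case are minor refinements the paper leaves implicit, not a different argument.
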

\begin{proof}
Since $ \mathcal{A}$ is a T-positive semidefinite tensor, from Lemma \ref{mn8}, $ \mathtt{bcirc}(\mathcal{A})$ is a symmetric positive semidefinite matrix. Now, we define $\mathcal{D}=\mathcal{C}^{\top}* \mathcal{A} *\mathcal{C} $, thus by Lemma \ref{mn2},
\[
\mathtt{bcirc}(\mathcal{D})=(\mathtt{bcirc}(\mathcal{C}))^{\top} \mathtt{bcirc}(\mathcal{A}) \mathtt{bcirc}(\mathcal{C}).
\]
Since $\mathtt{bcirc}(\mathcal{A})$ is a symmetric positive semidefinite matrix and $\mathtt{bcirc}( \mathcal{C})$ is a real  matrix, the symmetric matrix $\mathtt{bcirc}(\mathcal{D})$ is positive semidefinite. 
Hence, the T-symmetric tensor $\mathcal{D}$ is T-positive semidefinite (Lemma \ref{mn8}).
\end{proof}

\begin{lem}\label{jens:lem}
Let $\mathcal{P}$ be a real T-orthogonal projection tensor and $ \mathcal{M}$ be a real tensor with consistent dimension. For each $ s \in \Bbb N$, 
\[
\parallel \mathcal{P}*\mathcal{M}\parallel_{2}\leq \parallel \mathcal{P}*(\mathcal{M}* \mathcal{M}^{\top})^{s} *\mathcal{M}\parallel ^{\frac{1}{2s+1}}_{2}.
\]
\end{lem}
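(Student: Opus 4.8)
The plan is to reduce the tensor inequality to an equivalent statement about ordinary matrices via the block-circulant embedding, and then to settle that matrix statement with a single application of Jensen's (power-mean) inequality, which is the source of the label attached to this lemma.

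First I would set $P:=\mathtt{bcirc}(\mathcal{P})$ and $M:=\mathtt{bcirc}(\mathcal{M})$. By the remark following the definition of a T-orthogonal projection, $P$ is a real orthogonal projection matrix, and $M$ is a real matrix. Using the multiplicativity and power rule of $\mathtt{bcirc}$ together with $\mathtt{bcirc}(\mathcal{M}^\top)=M^\top$ (all from Lemma \ref{mn2}) and the norm identity $\|\mathcal{X}\|_2=\|\mathtt{bcirc}(\mathcal{X})\|_2$ (Definition \ref{mn3}), I would rewrite both sides: the left-hand side becomes $\|PM\|_2$, while the tensor inside the right-hand norm transforms into $P(MM^\top)^sM$, whose norm is $\|P(MM^\top)^sM\|_2$. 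Thus it suffices to prove the matrix inequality $\|PM\|_2\le \|P(MM^\top)^sM\|_2^{1/(2s+1)}$.

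Next I would put $A:=MM^\top$, a symmetric positive semidefinite matrix. Using the standard matrix fact $\|B\|_2^2=\|BB^\top\|_2$ and $P^\top=P=P^2$, a direct computation gives $\|PM\|_2^2=\|PAP\|_2$, and, writing $N:=P(MM^\top)^sM=PA^sM$, one checks $NN^\top=PA^sAA^sP=PA^{2s+1}P$, so that $\|N\|_2^2=\|PA^{2s+1}P\|_2$. Hence the target is equivalent to the operator inequality $\|PAP\|_2^{2s+1}\le \|PA^{2s+1}P\|_2$. To prove it, assume $\mu:=\|PAP\|_2>0$ (otherwise $\|PM\|_2=0$ and there is nothing to show). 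Since $PAP$ is symmetric positive semidefinite with range contained in $\mathrm{range}(P)$, its top unit eigenvector $x$ (eigenvalue $\mu$) satisfies $Px=x$, whence $\mu=x^\top PAPx=x^\top Ax$. Expanding $x$ in an orthonormal eigenbasis of $A$ with nonnegative eigenvalues $\lambda_i$ and weights $c_i^2$ (summing to $1$), Jensen's inequality for the convex map $t\mapsto t^{2s+1}$ gives $\mu^{2s+1}=(\sum_i c_i^2\lambda_i)^{2s+1}\le \sum_i c_i^2\lambda_i^{2s+1}=x^\top A^{2s+1}x=x^\top PA^{2s+1}Px\le\|PA^{2s+1}P\|_2$. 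Chaining this with the equalities above and taking the $(2s+1)$-th root finishes the argument.

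The main obstacle is the final step, as it is the only place where a genuine inequality rather than algebraic bookkeeping enters. The delicate points there are (i) justifying $Px=x$ for the extremal eigenvector, which is what lets the projection be absorbed into the quadratic form, and (ii) the passage from the operator statement to the scalar Jensen inequality $(x^\top Ax)^{2s+1}\le x^\top A^{2s+1}x$. Everything preceding it is a routine transfer through $\mathtt{bcirc}$ using the already established lemmas.
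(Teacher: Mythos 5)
Your proposal is correct, and its first half is exactly the paper's proof: both reduce the tensor statement to a matrix statement by passing to $\mathtt{bcirc}$, using the multiplicativity of $\mathtt{bcirc}$ (Lemma \ref{mn2}), the fact that $\mathtt{bcirc}$ of a T-orthogonal projection is an orthogonal projection matrix, and the norm identity $\|\mathcal{X}\|_2=\|\mathtt{bcirc}(\mathcal{X})\|_2$. The difference lies in the last step: the paper simply invokes Proposition 8.6 of \cite{halko2011finding} for the matrix inequality $\|PM\|_2\le\|P(MM^\top)^sM\|_2^{1/(2s+1)}$, whereas you prove that proposition from scratch. Your argument for it is sound: the identities $\|PM\|_2^2=\|PAP\|_2$ and $\|PA^sM\|_2^2=\|PA^{2s+1}P\|_2$ with $A=MM^\top$ follow from $\|B\|_2^2=\|BB^\top\|_2$ and $P=P^\top=P^2$; the extremal eigenvector $x$ of the positive semidefinite matrix $PAP$ satisfies $Px=\mu^{-1}PAPx=x$, so $\mu=x^\top Ax$; and Jensen's inequality for $t\mapsto t^{2s+1}$ applied to the spectral weights of $x$ in an eigenbasis of $A$ gives $\mu^{2s+1}\le x^\top A^{2s+1}x\le\|PA^{2s+1}P\|_2$. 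The degenerate case $\mu=0$ is handled separately, as you note. What your route buys is self-containedness — the reader need not consult the external reference — at the cost of a somewhat longer argument; the paper's citation-based proof is shorter but leaves the genuinely analytic content of the lemma outsourced.
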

\begin{proof}
Since $ \mathcal{P}$ is an orthogonal projection tensor, the matrix $ \mathtt{bcirc}(\mathcal{P})$ is an orthogonal projection matrix. Lemmas \ref{mn2}, \ref{mn6} and Proposition 8.6 of \cite{halko2011finding} imply that
\begin{eqnarray*}
\parallel \mathcal{P}*\mathcal{M}\parallel_{2}&=&\parallel \mathtt{bcirc}(\mathcal{P}*\mathcal{M})\parallel_{2}\\
&=& \parallel \mathtt{bcirc}(\mathcal{P})\mathtt{bcirc}(\mathcal{M})\parallel_{2}\\
&\leq & \parallel \mathtt{bcirc}(\mathcal{P})\big(\mathtt{bcirc}(\mathcal{M}) (\mathtt{bcirc}(\mathcal{M}))^{\top}\big)^{s} \mathtt{bcirc}(\mathcal{M})\parallel ^{\frac{1}{2s+1}}_{2}\\
&=&\parallel \mathcal{P}*(\mathcal{M}* \mathcal{M}^{\top})^{s} *\mathcal{M}\parallel ^{\frac{1}{2s+1}}_{2}.
\end{eqnarray*}
\end{proof}
\begin{lem}\label{mn16}
Let T-symmetric $ \mathcal{A} \in \Bbb R^{n \times n  \times n} $ be a T-positive semidefinite tensor. Then $\mathcal{A}^{k}$, for $ k=0,1,2,\dots $ are T-positive semidefinite tensors.
\end{lem}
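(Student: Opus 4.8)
The plan is to push the entire statement down to the matrix level through the block-circulant correspondence, where the analogue is a classical fact about symmetric positive semidefinite matrices. First I would invoke Lemma \ref{mn8}: since $\mathcal{A}$ is T-symmetric and T-positive semidefinite, the matrix $\mathtt{bcirc}(\mathcal{A})$ is symmetric and positive semidefinite. The whole argument then rests on translating "$\mathcal{A}^k$ is T-positive semidefinite" into "$\mathtt{bcirc}(\mathcal{A}^k)$ is symmetric positive semidefinite," which is exactly the content of the reverse direction of Lemma \ref{mn8}, provided I can first establish that $\mathcal{A}^k$ is T-symmetric.

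Next I would use the power-compatibility of $\mathtt{bcirc}$ from Lemma \ref{mn2}, namely $\mathtt{bcirc}(\mathcal{A}^k)=(\mathtt{bcirc}(\mathcal{A}))^k$, to reduce everything to a single symmetric positive semidefinite matrix $M:=\mathtt{bcirc}(\mathcal{A})$. It is classical that every integer power $M^k$ of a symmetric positive semidefinite matrix is again symmetric positive semidefinite: writing the spectral decomposition $M=\sum_i \lambda_i v_i v_i^{\top}$ with all $\lambda_i\ge 0$ gives $M^k=\sum_i \lambda_i^{k} v_i v_i^{\top}$ with $\lambda_i^{k}\ge 0$, and the case $k=0$ yields the identity matrix. (Equivalently one can express $M^k$ as a congruence of $M$ or $I$.) Hence $\mathtt{bcirc}(\mathcal{A}^k)=M^k$ is symmetric positive semidefinite.

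The one step that requires a little care—and is really the only obstacle—is confirming that $\mathcal{A}^k$ is itself T-symmetric, since Lemma \ref{mn8} is stated as an equivalence for T-symmetric tensors. For this I would combine the transpose rule $\mathtt{bcirc}(\mathcal{X}^{\top})=(\mathtt{bcirc}(\mathcal{X}))^{\top}$ with the power rule, both from Lemma \ref{mn2}, to compute $\mathtt{bcirc}\big((\mathcal{A}^k)^{\top}\big)=\big(\mathtt{bcirc}(\mathcal{A}^k)\big)^{\top}=(M^{k})^{\top}=(M^{\top})^{k}=M^{k}=\mathtt{bcirc}(\mathcal{A}^k)$, using that $M$ is symmetric. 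Since $\mathtt{bcirc}$ is injective (the first block column of $\mathtt{bcirc}(\mathcal{X})$ recovers $\mathcal{X}$), this forces $(\mathcal{A}^k)^{\top}=\mathcal{A}^k$, so $\mathcal{A}^k$ is T-symmetric.

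Finally I would assemble the pieces: $\mathcal{A}^k$ is T-symmetric and $\mathtt{bcirc}(\mathcal{A}^k)$ is symmetric positive semidefinite, so the reverse implication of Lemma \ref{mn8} yields that $\mathcal{A}^k$ is T-positive semidefinite, for every $k=0,1,2,\dots$, completing the proof. As an alternative route one could argue spectrally via Lemma \ref{mn77} and Definition \ref{mn45}: the T-eigenvalues of $\mathcal{A}^k$ are the $k$-th powers of the T-eigenvalues of $\mathcal{A}$ (again because $\mathtt{bcirc}(\mathcal{A}^k)=M^k$), hence nonnegative; but this route still requires the same T-symmetry verification, so I would prefer the direct $\mathtt{bcirc}$ argument.
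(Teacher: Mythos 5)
Your proposal is correct. It takes a mildly different route from the paper: the paper argues spectrally, invoking Lemma \ref{mn77} to say the T-eigenvalues of $\mathcal{A}$ are non-negative, then using Lemma \ref{mn2} to identify the T-eigenvalues of $\mathcal{A}^{k}$ with the eigenvalues of $(\mathtt{bcirc}(\mathcal{A}))^{k}$, which are $k$-th powers of non-negative numbers, and concluding again by Lemma \ref{mn77}. This is exactly the ``alternative route'' you mention at the end and set aside. Your primary argument instead goes through Lemma \ref{mn8}, reducing to the classical fact that powers of a symmetric positive semidefinite matrix $M=\mathtt{bcirc}(\mathcal{A})$ remain symmetric positive semidefinite via the spectral decomposition $M^{k}=\sum_i \lambda_i^{k} v_i v_i^{\top}$. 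The two routes are thin wrappers around the same matrix-level fact, so the difference is modest; what your write-up buys that the paper's does not is the explicit verification that $\mathcal{A}^{k}$ is T-symmetric (via the injectivity of $\mathtt{bcirc}$ and the transpose rule of Lemma \ref{mn2}), a hypothesis that both Lemma \ref{mn8} and Lemma \ref{mn77} require and that the paper's proof leaves implicit. Your observation that the spectral route needs the same T-symmetry check is accurate, and filling that gap makes your version slightly more complete than the published one.
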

\begin{proof}
Using Lemma \ref{mn77}, we conclude that since $\mathcal{A}$ is T-positive semidefinite, its T-eigenvalues are non-negative.
By Lemma \ref{mn2}, the T-eigenvalues of $ \mathcal{A}^{k} $ are eigenvalues of $ ( \mathtt{bcirc}(\mathcal{A}))^{k}$. Hence, T-eigenvalues of $ \mathcal{A}^{k} $ are non-negative. This completes the proof.
\end{proof}
From \cite{horn2012matrix}, we know that if symmetric matrices $ A_{1},A_2,\dots,A_{m} \in \mathbb{R}^{n\times n} $ are positive semidefinite then 
\begin{eqnarray}\label{mn21}
\| \sum\limits_{i=1}^{m} A_{i}\|_{2}^{2}\geq \sum\limits_{i=1}^{m} \| A_{i} \|_{2}^{2}.
\end{eqnarray}
The following lemma shows that this also holds for T-positive semidefinite tensors.
\begin{lem}\label{mn17}
Let T-symmetric tensors $\mathcal{A}_{1}, \mathcal{A}_{2},\cdots, \mathcal{A}_{m}$ be T-positive semidefinite. Then,
\[
\| \sum\limits_{i=1}^{m} \mathcal{A}_{i}\|_{2}^{2}\geq \sum\limits_{i=1}^{m} \| \mathcal{A}_{i} \|_{2}^{2}.
\]
\end{lem}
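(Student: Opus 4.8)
The plan is to push everything through the block-circulant operator and reduce the tensor statement to the matrix inequality \eqref{mn21}, exactly as in the preceding lemmas of this section. The three facts I will rely on are: the norm identity $\|\mathcal{X}\|_2=\|\mathtt{bcirc}(\mathcal{X})\|_2$ from Definition \ref{mn3}; the linearity of $\mathtt{bcirc}$ from Lemma \ref{mn2}; and the characterization in Lemma \ref{mn8} that a T-symmetric tensor is T-positive semidefinite exactly when its block-circulant matrix is symmetric positive semidefinite.

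First I would set $A_i=\mathtt{bcirc}(\mathcal{A}_i)$ for $i=1,\ldots,m$. Since each $\mathcal{A}_i$ is T-symmetric and T-positive semidefinite, Lemma \ref{mn8} guarantees that every $A_i$ is a symmetric positive semidefinite matrix. These are precisely the hypotheses under which the matrix inequality \eqref{mn21} applies, so the whole argument consists of transporting the claim across $\mathtt{bcirc}$, applying \eqref{mn21}, and transporting back.

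Concretely, using the norm identity, then linearity of $\mathtt{bcirc}$, then \eqref{mn21}, and finally the norm identity once more, I would write
\begin{eqnarray*}
\Big\| \sum_{i=1}^{m} \mathcal{A}_{i}\Big\|_{2}^{2}
&=& \Big\| \mathtt{bcirc}\Big(\sum_{i=1}^{m}\mathcal{A}_{i}\Big)\Big\|_{2}^{2}
= \Big\| \sum_{i=1}^{m} A_{i}\Big\|_{2}^{2}\\
&\geq& \sum_{i=1}^{m} \| A_{i}\|_{2}^{2}
= \sum_{i=1}^{m} \|\mathtt{bcirc}(\mathcal{A}_{i})\|_{2}^{2}
= \sum_{i=1}^{m} \|\mathcal{A}_{i}\|_{2}^{2},
\end{eqnarray*}
which is the desired conclusion.

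I do not expect a serious obstacle here; the content is essentially a dictionary translation of the known matrix result. The one point that genuinely must be checked, rather than assumed, is that the symmetric positive semidefiniteness hypothesis of \eqref{mn21} is met for each $A_i$, and this is exactly what Lemma \ref{mn8} supplies. A secondary bookkeeping remark worth stating explicitly is that $\mathtt{bcirc}$ commutes with finite sums (its linearity), so that $\mathtt{bcirc}(\sum_i \mathcal{A}_i)=\sum_i A_i$ and the matrix inequality may be invoked on the genuine sum rather than on each term separately.
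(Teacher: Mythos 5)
Your proof is correct and follows essentially the same route as the paper: both reduce the claim to the matrix inequality \eqref{mn21} by applying $\mathtt{bcirc}$, using Lemma \ref{mn8} to verify positive semidefiniteness of each $\mathtt{bcirc}(\mathcal{A}_i)$. Your version is slightly more explicit about the linearity of $\mathtt{bcirc}$ and the norm identity $\|\mathcal{X}\|_2=\|\mathtt{bcirc}(\mathcal{X})\|_2$, which the paper leaves implicit.
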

\begin{proof}
Since tensors $\mathcal{A}_{1},\mathcal{A}_{2}, \cdots, \mathcal{A}_{m}$ are T-positive semidefinite, matrices 
$\mathtt{bcirc}(\mathcal{A}_{i})$, for $ i=1,\dots,m $, are positive semidefinite. Hence, from \eqref{mn21} we have
\begin{eqnarray*}
\| \sum\limits_{i=1}^{m} \mathtt{bcirc}(\mathcal{A}_{i})\|_{2}^{2}\geq \sum\limits_{i=1}^{m} \| \mathtt{bcirc}(\mathcal{A}_{i}) \|_{2}^{2}.
\end{eqnarray*}
This completes the proof.
\end{proof}
From Lemma 11, one can easily see that $\| \sum\limits_{i=1}^{m} \mathcal{A}_{i}\|^2_{2}\geq \| \mathcal{A}_{i}\|^2_{2}$, which we use later in our analysis.
The following theorem presents an upper bound for the error norm of the approximation obtained by the proposed algorithm for a data tensor that admits a low-rank structure, that is, of low tubal rank structure.
\begin{thm}\label{mn44}
Let $ \mathcal{X} \in \Bbb R^{n_{1} \times n_{2} \times n_{3}}$ and $ \mathcal{Q} \in \Bbb R^{n_{1} \times n_{1} \times n_{3}} $ be an orthogonal tensor that obtained from Algorithm \ref{ALg_2}. Then, 
\[
\parallel (\mathcal{I}- \mathcal{Q}* \mathcal{Q}^{\top})* \mathcal{X}\parallel_{2}\leq \parallel (\mathcal{I}- \mathcal{P}_{k})*\mathcal{Z} \parallel _{2}^{\frac{1}{2q+1}},
\]
where $\mathcal{P}_{k}= \mathcal{Q}*\mathcal{Q}^{\top} $, the power iteration parameter $q\in \Bbb N$ and $ \mathcal{Z}= [\mathcal{X}, (\mathcal{X}*\mathcal{X}^{\top})*\mathcal{X}, \ldots, (\mathcal{X}*\mathcal{X}^{\top})^{q}*\mathcal{X}] $.
\end{thm}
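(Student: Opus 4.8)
The plan is to first invoke the single-projection estimate of Lemma~\ref{jens:lem} and then recognize the resulting power-iteration term as one block of $\mathcal{Z}$, which lets me dominate it by the full block tensor. Write $\mathcal{P}_k=\mathcal{Q}*\mathcal{Q}^\top$. The only property of $\mathcal{Q}$ I need is its orthonormality, $\mathcal{Q}^\top*\mathcal{Q}=\mathcal{I}$; its provenance from Algorithm~\ref{ALg_2} plays no role in the estimate itself. From this, $\mathcal{I}-\mathcal{P}_k$ is a T-orthogonal projection: it is T-symmetric since $(\mathcal{Q}*\mathcal{Q}^\top)^\top=\mathcal{Q}*\mathcal{Q}^\top$, and T-idempotent since $(\mathcal{I}-\mathcal{Q}*\mathcal{Q}^\top)*(\mathcal{I}-\mathcal{Q}*\mathcal{Q}^\top)=\mathcal{I}-\mathcal{Q}*\mathcal{Q}^\top$ using $\mathcal{Q}^\top*\mathcal{Q}=\mathcal{I}$. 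I would then apply Lemma~\ref{jens:lem} with $\mathcal{P}=\mathcal{I}-\mathcal{P}_k$, $\mathcal{M}=\mathcal{X}$ and $s=q$ to obtain
\[
\|(\mathcal{I}-\mathcal{P}_k)*\mathcal{X}\|_{2}\leq\|(\mathcal{I}-\mathcal{P}_k)*(\mathcal{X}*\mathcal{X}^\top)^{q}*\mathcal{X}\|_{2}^{\frac{1}{2q+1}}.
\]

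It remains to replace $(\mathcal{X}*\mathcal{X}^\top)^{q}*\mathcal{X}$ by the whole of $\mathcal{Z}$. The structural point is that this term is exactly the last block of $\mathcal{Z}=[\mathcal{X},(\mathcal{X}*\mathcal{X}^\top)*\mathcal{X},\ldots,(\mathcal{X}*\mathcal{X}^\top)^{q}*\mathcal{X}]$. Setting $\mathcal{W}=(\mathcal{I}-\mathcal{P}_k)*\mathcal{Z}$ and $\mathcal{W}_i=(\mathcal{I}-\mathcal{P}_k)*(\mathcal{X}*\mathcal{X}^\top)^{i}*\mathcal{X}$, the block t-product identities give the block-column transpose and hence $\mathcal{W}*\mathcal{W}^\top=\sum_{i=0}^{q}\mathcal{W}_i*\mathcal{W}_i^\top$, a sum of T-positive semidefinite tensors. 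Using Lemma~\ref{mn6} I rewrite $\|\mathcal{W}\|_{2}^{2}=\|\mathcal{W}*\mathcal{W}^\top\|_{2}$ and $\|\mathcal{W}_q\|_{2}^{2}=\|\mathcal{W}_q*\mathcal{W}_q^\top\|_{2}$, and the remark following Lemma~\ref{mn17} gives $\|\sum_{i=0}^{q}\mathcal{W}_i*\mathcal{W}_i^\top\|_{2}^{2}\geq\|\mathcal{W}_q*\mathcal{W}_q^\top\|_{2}^{2}$. These three facts combine to $\|\mathcal{W}_q\|_{2}\leq\|\mathcal{W}\|_{2}$, i.e.
\[
\|(\mathcal{I}-\mathcal{P}_k)*(\mathcal{X}*\mathcal{X}^\top)^{q}*\mathcal{X}\|_{2}\leq\|(\mathcal{I}-\mathcal{P}_k)*\mathcal{Z}\|_{2}.
\]

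Since $t\mapsto t^{1/(2q+1)}$ is increasing on $[0,\infty)$, chaining the two displays yields the claimed bound. I expect the middle step to be the main obstacle, as it is the only place requiring genuine assembly of the tensor machinery: one must pass to $\mathcal{W}*\mathcal{W}^\top$ through Lemma~\ref{mn6}, verify that each summand $\mathcal{W}_i*\mathcal{W}_i^\top$ is T-positive semidefinite so that the additive spectral-norm inequality for T-positive semidefinite tensors applies, and then use that inequality in the correct direction to bound a single block by the sum. The remaining steps are a direct substitution into Lemma~\ref{jens:lem} together with a routine monotonicity argument.
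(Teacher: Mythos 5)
Your proof is correct, and it rests on the same two pillars as the paper's argument --- Lemma \ref{jens:lem} and the spectral-norm lower bound for sums of T-positive semidefinite tensors (Lemma \ref{mn17} together with the remark after it) --- but you assemble them in a noticeably more direct way. The paper first passes to the T-SVD $\mathcal{X}=\mathcal{U}*\mathcal{S}*\mathcal{V}^{\top}$, conjugates the projection by $\mathcal{U}$ via Lemma \ref{mn15}, applies the Jensen-type inequality to the conjugated Gram tensor to reach $(\mathcal{S}*\mathcal{S}^{\top})^{2q+1}$, invokes Lemmas \ref{mn166} and \ref{mn16} to certify T-positive semidefiniteness of each sandwiched odd power, and only at the end reassembles the sum of odd powers into $\mathcal{P}*\mathcal{Z}*\mathcal{Z}^{\top}*\mathcal{P}$. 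You instead apply Lemma \ref{jens:lem} in its literal stated form (with $\mathcal{M}=\mathcal{X}$ and $s=q$) to land immediately on the last block of $\mathcal{W}=(\mathcal{I}-\mathcal{P}_k)*\mathcal{Z}$, and then observe $\mathcal{W}*\mathcal{W}^{\top}=\sum_{i}\mathcal{W}_i*\mathcal{W}_i^{\top}$ with each summand a Gram tensor, hence T-positive semidefinite; this sum is exactly the paper's $\mathcal{P}*\bigl[(\mathcal{X}*\mathcal{X}^{\top})+\cdots+(\mathcal{X}*\mathcal{X}^{\top})^{2q+1}\bigr]*\mathcal{P}$ seen without the SVD coordinates, and your single block $\mathcal{W}_q*\mathcal{W}_q^{\top}$ is the paper's tensor $\mathcal{C}$. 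Your route needs no T-SVD, no Lemma \ref{mn15}, and no explicit appeal to Lemmas \ref{mn166} and \ref{mn16}, so it is shorter and arguably cleaner; the paper's longer detour buys nothing extra for this particular bound. You are also right that only $\mathcal{Q}^{\top}*\mathcal{Q}=\mathcal{I}$ is used and not the provenance of $\mathcal{Q}$ from Algorithm \ref{ALg_2} --- the same is true of the paper's proof.
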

\begin{proof}
Let $\mathcal{X} = \mathcal{U}*\mathcal{S} * \mathcal{V}^\top$ be the T-SVD of
$\mathcal{X}$. Since $  \mathcal{P}_{k}= \mathcal{Q}*\mathcal{Q}^{\top} $, it is obvious that 
\begin{eqnarray}\label{mn23}
 \parallel (\mathcal{I}- \mathcal{Q}* \mathcal{Q}^{\top})* \mathcal{X} \parallel_{2}= \parallel (\mathcal{I}- \mathcal{P}_{k})*\mathcal{X}\parallel_{2}. 
\end{eqnarray}
Since $ \mathcal{P}_{k} $ is a T-orthogonal projection, we can see that $ \mathcal{P}= \mathcal{I}- \mathcal{P}_{k}$ is also a T-orthogonal projection tensor. Now, using Lemmas \ref{mn15} and \ref{mn6}, we have 
\begin{eqnarray}\label{mn20}
\nonumber \parallel \mathcal{P}* \mathcal{X}\parallel_{2}^{2(2q+1)}&= & \parallel \mathcal{P}*\mathcal{X}*\mathcal{X}^{\top}*\mathcal{P} \parallel_{2}^{2q+1}\\
\nonumber &=& \parallel \mathcal{U}^{T}*\mathcal{P}*\mathcal{U}*(\mathcal{S}*\mathcal{S}^{\top})*\mathcal{U}^{T}*\mathcal{P}*\mathcal{U} \parallel_{2}^{2q+1}\\
&\leq & \parallel (\mathcal{U}^{T}*\mathcal{P}*\mathcal{U})*(\mathcal{S}*\mathcal{S}^{\top})^{2q+1} *(\mathcal{U}^{T}*\mathcal{P}*\mathcal{U})\parallel_{2}.
\end{eqnarray}
in which the first inequality is obtained from Lemma \ref{jens:lem} (since $ \mathcal{U}^{T}*\mathcal{P}*\mathcal{U} $ is T-orthogonal projection). The tensor 
$ (\mathcal{U}^{T}*\mathcal{P}*\mathcal{U})*(\mathcal{S}*\mathcal{S}^{\top})^{r} *(\mathcal{U}^{T}*\mathcal{P}*\mathcal{U} )$, for all $ r \in \Bbb N $ is T-positive semidefinite because of Lemmas  \ref{mn166}. and  \ref{mn16}.
Now, let's define tensors  $\mathcal{C}$ and $ \mathcal{D} $ as follows
 \[
\mathcal{C}=({\mathcal U}^{T}*\mathcal{P}*\mathcal{U})*(\mathcal{S}*\mathcal{S}^{\top})^{(2q+1)} *(\mathcal{U}^{T}*\mathcal{P}*\mathcal{U}),
\]
 and 
\[
\mathcal{D}=(\mathcal{U}^{T}*\mathcal{P}*\mathcal{U})*(\mathcal{S}*\mathcal{S}^{\top})^{3} *(\mathcal{U}^{T}*\mathcal{P}*\mathcal{U})
+ \cdots + (\mathcal{U}^{T}*\mathcal{P}*\mathcal{U})*(\mathcal{S}*\mathcal{S}^{\top})^{2q} *(\mathcal{U}^{T}*\mathcal{P}*\mathcal{U}),
\]
Since each term of $\mathcal{D}$ is T-positive semidefinite, it can be easily shown that $\mathcal{D}$ is also semidefinite. So, from Lemma \ref{mn17}, we have 
\begin{eqnarray}\label{mn22}
\parallel \mathcal{C} \parallel_{2}\leq \parallel \mathcal{C} + \mathcal{D}\parallel_{2}. 
\end{eqnarray}
From  \eqref{mn20} -\eqref{mn22} and Lemma \ref{mn6}, we can write
\begin{eqnarray*}
\parallel \mathcal{P}* \mathcal{X}\parallel_{2}^{2(2q+1)} &\leq & \parallel ({\mathcal U}^{T}*\mathcal{P}*\mathcal{U})*(\mathcal{S}*\mathcal{S}^{\top}) *(\mathcal{U}^{T}*\mathcal{P}*\mathcal{U})\\
 & + &(\mathcal{U}^{T}*\mathcal{P}*\mathcal{U})(\mathcal{S}*\mathcal{S}^{\top})^{3} *(\mathcal{U}^{T}*\mathcal{P}*\mathcal{U})\\
&+& \cdots + (\mathcal{U}^{T}*\mathcal{P}*\mathcal{U})*(\mathcal{S}*\mathcal{S}^{\top})^{2q+1} *(\mathcal{U}^{T}*\mathcal{P}*\mathcal{U})\parallel_{2}\\
&=&\parallel \mathcal{P}*\mathcal{U}*(\mathcal{S}*\mathcal{S}^{\top}) *\mathcal{U}^{T}*\mathcal{P}+ \mathcal{P}*\mathcal{U}(\mathcal{S}*\mathcal{S}^{\top})^{3} *\mathcal{U}^{T}*\mathcal{P}\\
&+& \cdots + \mathcal{P}*\mathcal{U}*(\mathcal{S}*\mathcal{S}^{\top})^{2q+1} *\mathcal{U}^{T}*\mathcal{P}\parallel_{2}\\
&=& \parallel \mathcal{P}* [ (\mathcal{X} * \mathcal{X}^{\top})+ ( \mathcal{X} * \mathcal{X}^{\top})^{3} + \cdots + ( \mathcal{X} * \mathcal{X}^{\top})^{2q+1}]*\mathcal{P} \parallel_{2}\\
&=& \parallel \mathcal{P} *[\mathcal{X}, (\mathcal{X}*\mathcal{X}^{\top})*\mathcal{X},\dots ,(\mathcal{X}*\mathcal{X}^{\top})^{q}*\mathcal{X} ]\\
&*&
 \left[
  \begin{array}{ccc} 
 \mathcal{X}^{\top}  \\
((\mathcal{X}*\mathcal{X}^{\top})*\mathcal{X})^{\top} \\ 
\vdots \\
   ((\mathcal{X}*\mathcal{X}^{\top})^{q}*\mathcal{X})^{\top} 
   \end{array} \right] * \mathcal{P}  \parallel_{2}\\
   &=& \parallel \mathcal{P} *[\mathcal{X}, (\mathcal{X}*\mathcal{X}^{\top})*\mathcal{X},\dots ,(\mathcal{X}*\mathcal{X}^{\top})^{q}*\mathcal{X} ] \parallel_{2}^{2}\\
&=& \parallel \mathcal{P} * \mathcal{Z} \parallel_{2}^{2}.
\end{eqnarray*}
From \eqref{mn23}, we have
\[
\parallel (\mathcal{I}- \mathcal{Q}* \mathcal{Q}^{\top})* \mathcal{X} \parallel_{2}=\parallel (\mathcal{I}-\mathcal{P}_{k})* \mathcal{X}\parallel_{2}\leq
\parallel (\mathcal{I}-\mathcal{P}_{k})* \mathcal{Z}\parallel_{2}^{\frac{1}{2q+1}}. 
\]
\end{proof}
Let $ \mathcal{X} = \mathcal{U} * \mathcal{S} * \mathcal{V}^\top$ be the T-SVD of
$\mathcal{X} \in \mathbb{R}^{I_1 \times I_2 \times I_3}$. Now, we have
\begin{eqnarray}\label{mn1}
(\mathcal{X}* \mathcal{X}^\top)^{q}*\mathcal{X}  &=& \mathcal{U}* (\mathcal{S}*\mathcal{S}^{\top})^{q} *\mathcal{S} * \mathcal{V}^{\top},
\end{eqnarray}
where $ \mathcal{X}^{q} $ denotes $ \underbrace{\mathcal{X} *\mathcal{X}* \dots * \mathcal{X}}_\text{q times} $. By Algorithm \ref{ALg_2}, we can write
\begin{eqnarray*}\label{mn27}
\nonumber\mathcal{K}=[\mathcal{K}_0,\mathcal{K}_1,\ldots,\mathcal{K}_q]&=&[\mathcal{X}*\mathcal{B}, (\mathcal{X}*\mathcal{X}^{\top})*\mathcal{X}*\mathcal{B},\ldots , (\mathcal{X}*\mathcal{X}^{\top})^{q}*\mathcal{X}*\mathcal{B}]\\
\nonumber &=& [\mathcal{X}, (\mathcal{X}*\mathcal{X}^{\top})*\mathcal{X}, \ldots, (\mathcal{X}*\mathcal{X}^{\top})^{q}*\mathcal{X}]\\&&*
\left[
 \begin{array}{ccccc} 
\mathcal{B} & && &  \\ 
 & \mathcal{B} & & & \\
   & &\ddots  & &\\
  && & & \mathcal{B}
   \end{array} \right],  
\end{eqnarray*}
in which $ \mathcal{Z}= [\mathcal{X}, (\mathcal{X}*\mathcal{X}^{\top})*\mathcal{X}, \ldots, (\mathcal{X}*\mathcal{X}^{\top})^{q}*\mathcal{X}] \in \Bbb R^{I_{1}\times I_{2}(q+1)  \times I_{3}}$. Now, as a result of relation \eqref{mn1}, we have
\begin{eqnarray}\label{mn24}
\mathcal{Z}&=&[\mathcal{X}, (\mathcal{X}*\mathcal{X}^{\top})*\mathcal{X}, \ldots, (\mathcal{X}*\mathcal{X}^{\top})^{q}*\mathcal{X}]\\
\nonumber &=& [\mathcal{U} * \mathcal{S} * \mathcal{V}^\top , \mathcal{U}* (\mathcal{S}*\mathcal{S}^{\top}) *\mathcal{S} * \mathcal{V}^{\top},\ldots , \mathcal{U}* (\mathcal{S}*\mathcal{S}^{\top})^{q} *\mathcal{S} * \mathcal{V}^{\top}\\
\nonumber &=&\mathcal{U}* [\mathcal{S},(\mathcal{S}*\mathcal{S}^{\top}) *\mathcal{S}, \ldots , (\mathcal{S}*\mathcal{S}^{\top})^{q}*\mathcal{S}] *
\left[
 \begin{array}{ccccc} 
\mathcal{V}^{\top} & && &  \\ 
 &\mathcal{V}^{\top} & & & \\
   & &\ddots  & &\\
  && & & \mathcal{V}^{\top}
   \end{array} \right]. 
\end{eqnarray}
Consider the T-SVD for tensor the $ [\mathcal{S},(\mathcal{S}*\mathcal{S}^{\top}) *\mathcal{S}, \ldots, (\mathcal{S}*\mathcal{S}^{\top})^{q}*\mathcal{S}]$, thus
\begin{eqnarray}\label{mn25}
[\mathcal{S},(\mathcal{S}*\mathcal{S}^{\top}) *\mathcal{S}, \ldots, (\mathcal{S}*\mathcal{S}^{\top})^{q}*\mathcal{S}] = \tilde{\mathcal{U}}*\tilde{\mathcal{S}}*\tilde{\mathcal{V}}^{\top},
\end{eqnarray}
where $ \tilde{\mathcal{U}} $ and $\tilde{\mathcal{V}}$ are orthogonal tensors and $ \tilde{\mathcal{S}} $ is an f-diagonal tensor, in which the entries on the first frontal slice are the singular values of the tensor $ [\mathcal{S},(\mathcal{S}*\mathcal{S}^{\top}) *\mathcal{S}, \ldots, (\mathcal{S}*\mathcal{S}^{\top})^{q}*\mathcal{S}]$. In other words, they are the square root of the eigenvalues of the following tensor
\[
\mathcal{S}*\mathcal{S}^{\top} + (\mathcal{S}*\mathcal{S}^{\top})^{3}+ \cdots + (\mathcal{S}*\mathcal{S}^{\top})^{2q+1}.
\] 
Now, relations \eqref{mn24} and \eqref{mn25} conclude that 
\begin{eqnarray}\label{mn26}
\nonumber \mathcal{Z}&=&\mathcal{U}*\tilde{\mathcal{U}}* \tilde{\mathcal{S}}* \tilde{\mathcal{V}}^{\top} *
\left[
 \begin{array}{ccccc} 
\mathcal{V} & && &  \\
 &\mathcal{V} & & & \\
& &\ddots  & &\\
&& & & \mathcal{V}
\end{array} \right]^{\top}, \\
&=& \widehat{\mathcal{U}}* \tilde{\mathcal{S}}*\widehat{\mathcal{V}}^{\top},
\end{eqnarray}
where $ \widehat{\mathcal{U}}= \mathcal{U}*\tilde{\mathcal{U}} \in \Bbb R^{I_{1}\times I_{1} \times I_{3}}$ and 
\[
\widehat{\mathcal{V}}^{\top}=\tilde{\mathcal{V}}^{\top} *
\left[
 \begin{array}{ccccc} 
\mathcal{V} & && &  \\
 &\mathcal{V} & & & \\
   & &\ddots  & &\\
  && & & \mathcal{V}
   \end{array} \right]^{\top} \in \Bbb R^{I_{2}\times I_{2}(q+1)\times I_{3}}.
\] 
\begin{thm}
Let $ \mathcal{X} \in \Bbb R^{n_{1} \times n_{2} \times n_{3}}$ and $ \mathcal{Q} \in \Bbb R^{n_{1} \times n_{1} \times n_{3}} $ be an orthogonal tensor that obtained from Algorithm \ref{ALg_2}. Then, by notations in \eqref{mn24} and \eqref{mn26}
\[
\parallel \mathcal{X}- \mathcal{Q}* \mathcal{Q}^{\top}* \mathcal{X}\parallel_{2}\leq \sigma_{\max}^{\frac{1}{2q+1}}(\tilde{\mathcal{S}}),
\]
in which power parameter $q \in \Bbb N$ and tensor $\tilde{\mathcal{S}}$ is mentioned in \eqref{mn26}.
\end{thm}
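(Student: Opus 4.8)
The plan is to read the claimed inequality as a direct consequence of the preceding Theorem~\ref{mn44} together with the explicit factorization of $\mathcal{Z}$ recorded in \eqref{mn26}. The first step is to notice that $\mathcal{X} - \mathcal{Q}*\mathcal{Q}^\top*\mathcal{X} = (\mathcal{I} - \mathcal{Q}*\mathcal{Q}^\top)*\mathcal{X}$, so the left-hand side of the target bound is precisely the quantity controlled by Theorem~\ref{mn44}. Setting $\mathcal{P}_k = \mathcal{Q}*\mathcal{Q}^\top$, that theorem yields
\[
\parallel \mathcal{X} - \mathcal{Q}*\mathcal{Q}^\top*\mathcal{X}\parallel_{2} \leq \parallel (\mathcal{I} - \mathcal{P}_k)*\mathcal{Z}\parallel_{2}^{\frac{1}{2q+1}}.
\]
It then suffices to prove $\parallel (\mathcal{I} - \mathcal{P}_k)*\mathcal{Z}\parallel_{2} \leq \sigma_{\max}(\tilde{\mathcal{S}})$ and raise both sides to the power $1/(2q+1)$.

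Next I would discard the projection factor. As already observed in the proof of Theorem~\ref{mn44}, $\mathcal{P}_k$ being a T-orthogonal projection forces $\mathcal{I} - \mathcal{P}_k$ to be one as well, so by Lemma~\ref{mn2} its $\mathtt{bcirc}$ image is an orthogonal projection matrix and hence has spectral norm at most $1$. Submultiplicativity of the spectral norm under the T-product (Lemma~\ref{mn43}) then gives
\[
\parallel (\mathcal{I} - \mathcal{P}_k)*\mathcal{Z}\parallel_{2} \leq \parallel \mathcal{I} - \mathcal{P}_k\parallel_{2}\,\parallel \mathcal{Z}\parallel_{2} \leq \parallel \mathcal{Z}\parallel_{2}.
\]

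The final step is to identify $\parallel \mathcal{Z}\parallel_{2}$ with $\sigma_{\max}(\tilde{\mathcal{S}})$. Here I would invoke the decomposition $\mathcal{Z} = \widehat{\mathcal{U}}*\tilde{\mathcal{S}}*\widehat{\mathcal{V}}^\top$ from \eqref{mn26}, in which $\widehat{\mathcal{U}} = \mathcal{U}*\tilde{\mathcal{U}}$ and $\widehat{\mathcal{V}}$ are orthogonal tensors and $\tilde{\mathcal{S}}$ is f-diagonal. Since this is a genuine T-SVD of $\mathcal{Z}$, the orthogonal invariance of the spectral norm (Lemma~\ref{mn15}) gives $\parallel \mathcal{Z}\parallel_{2} = \parallel \tilde{\mathcal{S}}\parallel_{2} = \sigma_{\max}(\tilde{\mathcal{S}})$, the last equality holding because the spectral norm of an f-diagonal tensor equals its largest singular value. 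Chaining the three displayed inequalities finishes the proof.

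The step I expect to be the main obstacle is justifying that $\mathcal{Z} = \widehat{\mathcal{U}}*\tilde{\mathcal{S}}*\widehat{\mathcal{V}}^\top$ really is a valid T-SVD, i.e.\ that both outer factors are orthogonal tensors of matching dimensions. For $\widehat{\mathcal{U}}$ this is immediate, since a T-product of square orthogonal tensors is orthogonal. For $\widehat{\mathcal{V}}$ one must verify that the block-diagonal tensor assembled from the $q+1$ copies of $\mathcal{V}$ in \eqref{mn24} is orthogonal, and that composing it with $\tilde{\mathcal{V}}$ preserves orthogonality; this is precisely where the dimension bookkeeping running through \eqref{mn24}--\eqref{mn26} must be carried out carefully, after which Lemma~\ref{mn15} applies cleanly.
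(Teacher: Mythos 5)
Your proposal is correct and follows essentially the same route as the paper's own proof: apply Theorem~\ref{mn44}, drop the projection factor via Lemma~\ref{mn43} together with $\parallel\mathcal{I}-\mathcal{P}_k\parallel_2\leq 1$, and then use the factorization \eqref{mn26} with the orthogonal invariance of Lemma~\ref{mn15} to identify $\parallel\mathcal{Z}\parallel_2$ with $\sigma_{\max}(\tilde{\mathcal{S}})$. The only cosmetic difference is that the paper establishes $\parallel\mathcal{I}-\mathcal{P}_k\parallel_2=1$ exactly via the T-eigenvalues of the idempotent $\mathtt{bcirc}(\mathcal{P})$, whereas you only need (and use) the bound by $1$; your closing concern about the orthogonality and dimensions of $\widehat{\mathcal{V}}$ is the same bookkeeping the paper carries out in \eqref{mn24}--\eqref{mn26}.
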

\begin{proof}
Since $\mathcal{P}=\mathcal{I}- \mathcal{P}_{k} $ is a T-orthogonal projection tensor, $ \mathtt{bcirc}(\mathcal{P})$ is an orthogonal projection matrix. Hence, the eigenvalues of idempotent matrix $ \mathtt{bcirc}(\mathcal{P}) $ are $ 0 $ or $ 1 $. From Definition \eqref{mn45}, it follows that the T-eigenvalues of $\mathcal{P}$ are $0$ or $1$.
Now, Lemma \ref{mn6} proves that $\parallel \mathcal{I}- \mathcal{P}_{k}\parallel_{2}=1$, where $  \mathcal{P}_{k}= \mathcal{Q}*\mathcal{Q}^{\top} $. Hence, from Lemma \ref{mn15}, Theorem \ref{mn44} and Lemma \ref{mn43}, we have
\begin{eqnarray*}
\parallel \mathcal{X}- \mathcal{Q}* \mathcal{Q}^{\top}* \mathcal{X}\parallel_{2}&\leq &\parallel (\mathcal{I}- \mathcal{P}_{k})*\mathcal{Z} \parallel _{2}^{\frac{1}{2q+1}}\\
&\leq & (\parallel \mathcal{I}- \mathcal{P}_{k}\parallel_{2}\parallel \mathcal{Z}\parallel_{2})^{\frac{1}{2q+1}}\\
&= & \parallel \mathcal{Z}\parallel_{2}^{\frac{1}{2q+1}}\\
&=&\parallel \widehat{\mathcal{U}}* \tilde{\mathcal{S}}*\widehat{\mathcal{V}}^{\top} \parallel_{2}^{\frac{1}{2q+1}}\\
&=& \parallel \tilde{\mathcal{S}} \parallel_{2}^{\frac{1}{2q+1}},
\end{eqnarray*}
and this completes the proof.
\end{proof}

It is not difficult to show that if $A^{T}B =O$, in which $O$ is zero matrix, then $||A+B||_{F}^{2}=||A||_{F}^{2}+ ||B||_{F}^{2}$, where $ A $ and $ B $ are matrices of size $ m\times n $. The following lemma extends this result to tensors under the T-product.
\begin{lem}\label{mn440}
Let $ \mathcal{A} $ and $ \mathcal{B} $ be two real tensors of dimension $ n_{1}\times n_{2} \times n_{3}$. If $\mathcal{A}^{\top}*\mathcal{B} =\mathcal{O} $, where $\mathcal{O}$ is a zero tensor of dimension $n_2 \times n_2 \times n_3$, then
\[
\parallel \mathcal{A} + \mathcal{B}\parallel_{F}^{2}=\parallel\mathcal{A}\parallel_{F}^{2} +\parallel\mathcal{B}\parallel_{F}^{2}.
\]
\end{lem}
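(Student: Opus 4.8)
The plan is to push everything through the block circulant representation, where the statement collapses onto the matrix identity quoted just above. The first ingredient I would record is the elementary scaling
\[
\parallel \mathtt{bcirc}(\mathcal{X})\parallel_{F}^{2}=n_{3}\parallel \mathcal{X}\parallel_{F}^{2},\qquad \mathcal{X}\in\Bbb R^{n_{1}\times n_{2}\times n_{3}},
\]
which holds because every frontal slice $X^{(k)}$ appears exactly once in each of the $n_{3}$ block rows of $\mathtt{bcirc}(\mathcal{X})$, hence $n_{3}$ times in total. (Equivalently, one may invoke the Fourier relation \eqref{eq_fou}.) I would then set $A=\mathtt{bcirc}(\mathcal{A})$ and $B=\mathtt{bcirc}(\mathcal{B})$, which are real matrices of common size $n_{1}n_{3}\times n_{2}n_{3}$.

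The second step is to translate the hypothesis $\mathcal{A}^{\top}*\mathcal{B}=\mathcal{O}$ into matrix language. By the multiplication and transpose rules of Lemma \ref{mn2},
\[
\mathtt{bcirc}(\mathcal{A}^{\top}*\mathcal{B})=\mathtt{bcirc}(\mathcal{A}^{\top})\,\mathtt{bcirc}(\mathcal{B})=A^{\top}B,
\]
and since $\mathtt{bcirc}$ sends the zero tensor to the zero matrix and is injective, the assumption is exactly the matrix orthogonality $A^{\top}B=O$.

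With $A^{\top}B=O$ established, the matrix fact recalled just before the lemma applies to $A$ and $B$ and gives $\parallel A+B\parallel_{F}^{2}=\parallel A\parallel_{F}^{2}+\parallel B\parallel_{F}^{2}$. Finally I would use the linearity of $\mathtt{bcirc}$ from Lemma \ref{mn2}, so that $\mathtt{bcirc}(\mathcal{A}+\mathcal{B})=A+B$, and apply the scaling of the first step to each of the three norms, dividing through by $n_{3}$ to recover
\[
\parallel \mathcal{A}+\mathcal{B}\parallel_{F}^{2}=\parallel \mathcal{A}\parallel_{F}^{2}+\parallel \mathcal{B}\parallel_{F}^{2}.
\]

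The proof is essentially bookkeeping once the two translations are in place; the only points deserving care are verifying the factor $n_{3}$ in the Frobenius scaling, so as to be certain it cancels uniformly across all three terms, and confirming that $\mathcal{A}^{\top}*\mathcal{B}=\mathcal{O}$ is genuinely \emph{equivalent} to $A^{\top}B=O$ rather than merely implied by it. Both follow from Lemma \ref{mn2} together with the injectivity of $\mathtt{bcirc}$.
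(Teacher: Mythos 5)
Your proof is correct, but it takes a different representation than the paper: you work with the real block circulant matrix $\mathtt{bcirc}(\cdot)$, whereas the paper's proof passes to the Fourier domain, using the block diagonal matrix $\overline{A}=\mathtt{bdiag}(\overline{\mathcal{A}})$, the fact (quoted from \cite{cao2023some}) that $\mathcal{A}^{\top}*\mathcal{B}=\mathcal{O}$ forces the corresponding Fourier-domain product to vanish, and the norm relation \eqref{eq_fou} to convert back. The two routes are structurally parallel --- both lift the tensor statement to a matrix Pythagorean identity and then cancel a uniform factor of $n_{3}$ --- but each buys something slightly different. Your version stays entirely over the reals, so the hypothesis translates cleanly to $A^{\top}B=O$ via Lemma \ref{mn2} and the classical real identity applies verbatim; the only extra ingredient is the scaling $\parallel\mathtt{bcirc}(\mathcal{X})\parallel_{F}^{2}=n_{3}\parallel\mathcal{X}\parallel_{F}^{2}$, which you justify correctly by counting slice occurrences (and which the paper never states explicitly, so it is worth keeping that one-line verification). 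The paper's version reuses machinery already displayed in the preliminaries (\eqref{eq_fou}), but its matrices $\overline{A},\overline{B}$ are complex, so the orthogonality condition that actually drives the Pythagorean identity there is $\overline{A}^{*}\overline{B}=O$ (conjugate transpose), a subtlety your real-valued argument sidesteps entirely. Both proofs are complete; yours is arguably the more self-contained of the two.
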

\begin{proof}
From \cite{cao2023some}, we know that if  $\mathcal{A}^{\top}*\mathcal{B} =\mathcal{O} $ then $\overline{A}^{\top}\overline{B} =O $. 
Now using relation \eqref{eq_fou} and above discussion, we have
\begin{eqnarray*}
\parallel\mathcal{A} + \mathcal{B} \parallel_{F}^{2}&=& \frac{1}{n_3}\parallel \overline{A+B}\parallel_{F}^{2}\\
&=& \frac{1}{n_3}\parallel\overline{A}+\overline{B}\parallel_{F}^{2}\\
&=& \frac{1}{n_3}\parallel\overline{A}\parallel_{F}^{2} +\frac{1}{n_3}\parallel \overline{B}\parallel_{F}^{2}  \\
&=& \parallel\mathcal{A}\parallel_{F}^{2} +\parallel\mathcal{B}\parallel_{F}^{2}.
\end{eqnarray*}
\end{proof}
Suppose that $ \mathcal{A} \in \Bbb R^{n_{1}\times n_{2}\times n_{3}} $ and $\mathcal{A}=\mathcal{U}* \mathcal{S}*\mathcal{V}^{\top}$ is T-SVD of $ \mathcal{A} $. Let $ \mathcal{A}_{k}$ be the best rank-k approximation for $ \mathcal{A} $, then 
\begin{eqnarray}\label{mn37}
\mathcal{A}=\mathcal{A}_{k} +\mathcal{A}_{k,\perp},
\end{eqnarray}
in which $\mathcal{A}_{k}=\mathcal{U}_{k}* \mathcal{S}_{k}*\mathcal{V}_{k}^{\top}$, $\mathcal{U}_{k}=\mathcal{U}(:,1:k,:)$, $ \mathcal{S}_{k}=\mathcal{S}(1:k,1:k,:)$, $ \mathcal{V}_{k}=\mathcal{V}(:,1:k,:)$. Also, we have $\mathcal{A}_{k,\perp}=\mathcal{U}_{k,\perp}* \mathcal{S}_{k,\perp}*\mathcal{V}_{k,\perp}^{\top}$, $\mathcal{U}_{k,\perp}=\mathcal{U}(:,k+1:n_{1},:) $, $\mathcal{S}_{k,\perp}=\mathcal{S}(k+1:n_{1},k+1:n_{2},:)$, and $ \mathcal{V}_{k,\perp}=\mathcal{V}(:,k+1:n_{2},:)$.
\begin{lem}\label{mn38}
Let $ \mathcal{Q} $ be an orthogonal basis for $ \mathcal{K} $ and $ \widehat{\mathcal{U}}_{k} $ be the top $k$ vertical laterals of $ \widehat{\mathcal{U}}$ for the Algorithm \ref{ALg_2}. Then, 
\[
\mathcal{A}-\widehat{\mathcal{U}}_{k}*\widehat{\mathcal{U}}_{k}^{\top} *\mathcal{A}=\mathcal{A}- \mathcal{Q} *(\mathcal{Q}^{\top}*\mathcal{A})_{k}.
\]
\end{lem}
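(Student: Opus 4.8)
The plan is to unwind the quantities produced by Algorithm \ref{ALg_2} and reduce the left-hand side to the right-hand side purely by T-product manipulations, where $\mathcal{A}$ denotes the data tensor playing the role of $\mathcal{X}$ in the algorithm. Recall that the algorithm sets $\mathcal{C}=\mathcal{Q}^{\top}*\mathcal{A}$, computes its T-SVD $\mathcal{C}=\mathcal{U}_c*\mathcal{S}_c*\mathcal{V}_c^{\top}$, and forms $\widehat{\mathcal{U}}=\mathcal{Q}*\mathcal{U}_c$. First I would record that selecting the top $k$ laterals commutes with left multiplication by $\mathcal{Q}$: by the distributive rule $\mathcal{F}*[\mathcal{A}\ \mathcal{B}]=[\mathcal{F}*\mathcal{A}\ \ \mathcal{F}*\mathcal{B}]$ of the block-tensor lemma, one has $\widehat{\mathcal{U}}_k=\widehat{\mathcal{U}}(:,1:k,:)=\mathcal{Q}*\mathcal{U}_{c,k}$, where $\mathcal{U}_{c,k}=\mathcal{U}_c(:,1:k,:)$. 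The transpose rule $(\mathcal{X}*\mathcal{Y})^{\top}=\mathcal{Y}^{\top}*\mathcal{X}^{\top}$ then gives $\widehat{\mathcal{U}}_k^{\top}=\mathcal{U}_{c,k}^{\top}*\mathcal{Q}^{\top}$.

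Substituting these into the left-hand side and using $\mathcal{Q}^{\top}*\mathcal{A}=\mathcal{C}$, I would obtain
\[
\widehat{\mathcal{U}}_k*\widehat{\mathcal{U}}_k^{\top}*\mathcal{A}
=\mathcal{Q}*\mathcal{U}_{c,k}*\mathcal{U}_{c,k}^{\top}*\mathcal{Q}^{\top}*\mathcal{A}
=\mathcal{Q}*\bigl(\mathcal{U}_{c,k}*\mathcal{U}_{c,k}^{\top}*\mathcal{C}\bigr).
\]
Thus the whole lemma reduces to the single identity $\mathcal{U}_{c,k}*\mathcal{U}_{c,k}^{\top}*\mathcal{C}=\mathcal{C}_k$, where $\mathcal{C}_k=(\mathcal{Q}^{\top}*\mathcal{A})_k$ is the best tubal-rank-$k$ approximation of $\mathcal{C}$; multiplying this reduced identity on the left by $\mathcal{Q}$ and subtracting both sides from $\mathcal{A}$ yields exactly the stated equality.

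To establish the key identity I would insert the T-SVD $\mathcal{C}=\mathcal{U}_c*\mathcal{S}_c*\mathcal{V}_c^{\top}$ and exploit the orthogonality of $\mathcal{U}_c$. Since $\mathcal{U}_c^{\top}*\mathcal{U}_c=\mathcal{I}$, restricting to the first $k$ laterals gives $\mathcal{U}_{c,k}^{\top}*\mathcal{U}_c=[\mathcal{I}_k\ \ \mathcal{O}]$, so that $\mathcal{U}_{c,k}*\mathcal{U}_{c,k}^{\top}*\mathcal{C}=\mathcal{U}_{c,k}*[\mathcal{I}_k\ \ \mathcal{O}]*\mathcal{S}_c*\mathcal{V}_c^{\top}$. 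Here the f-diagonal structure of $\mathcal{S}_c$ is decisive: multiplying $[\mathcal{I}_k\ \ \mathcal{O}]$ against $\mathcal{S}_c$ retains only the leading $k\times k$ diagonal block $\mathcal{S}_{c,k}$, and the subsequent product with $\mathcal{V}_c^{\top}$ then engages only its first $k$ laterals, leaving $\mathcal{U}_{c,k}*\mathcal{S}_{c,k}*\mathcal{V}_{c,k}^{\top}$, which is precisely $\mathcal{C}_k$ by the definition of the truncated T-SVD.

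I expect the main obstacle to be the bookkeeping in this last step: verifying rigorously, via the block-multiplication rules, that the T-orthogonal projector $\mathcal{U}_{c,k}*\mathcal{U}_{c,k}^{\top}$ annihilates the tail of the T-SVD and reproduces \emph{exactly} the truncation $\mathcal{C}_k$ rather than merely an approximation. Because all of this is slice-wise diagonal structure in the Fourier domain, the cleanest route is likely to transfer the identity $\mathcal{U}_{c,k}*\mathcal{U}_{c,k}^{\top}*\mathcal{C}=\mathcal{C}_k$ to each frontal slice $\overline{\mathcal{C}}(:,:,i)$, where it becomes the familiar matrix fact that projecting onto the top $k$ left singular vectors yields the best rank-$k$ approximation, and then fold back using the bijection between tensor operations and their block-diagonal Fourier representations.
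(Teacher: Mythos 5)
Your proof is correct and follows essentially the same route as the paper: both arguments first write $\widehat{\mathcal{U}}_{k}=\mathcal{Q}*\mathcal{U}_{C,k}$ and then reduce the claim to the single identity $\mathcal{U}_{C,k}*\mathcal{U}_{C,k}^{\top}*\mathcal{C}=\mathcal{C}_{k}$ with $\mathcal{C}=\mathcal{Q}^{\top}*\mathcal{A}$. The only cosmetic difference is how that identity is verified: the paper rewrites the projector as $\mathcal{C}_{k}*\mathcal{C}_{k}^{\dagger}$ and splits $\mathcal{C}=\mathcal{C}_{k}+\mathcal{C}_{k,\perp}$, whereas you multiply out the T-SVD of $\mathcal{C}$ directly using the block rules and the f-diagonal structure of $\mathcal{S}_{c}$ -- both are valid.
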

\begin{proof}
By Algorithm \ref{ALg_2}, $ \widehat{\mathcal{U}}_{k} $ involves  the top $k$ vertical laterals of $\widehat{\mathcal{U}}=\mathcal{Q}*\mathcal{U}_{C}$, where $ \mathcal{U}_{C}$ is obtained from applying T-SVD on $ \mathcal{C}=\mathcal{Q}^{\top}*\mathcal{A} $. Hence, the top $k$ vertical laterals of $\mathcal{U}_{C} $, (i.e. $ \mathcal{U}_{C,k} $), span the same range as $ \mathcal{C}_{k}$. Thus, we have
\begin{eqnarray*}
\mathcal{A}-\widehat{\mathcal{U}}_{k}* \widehat{\mathcal{U}}_{k} ^{\top} * \mathcal{A}&=& \mathcal{A}- (\mathcal{Q}*\mathcal{U}_{C})_{k} * ((\mathcal{Q}*\mathcal{U}_{C})_{k})^{\top}* \mathcal{A}\\
&=& \mathcal{A}- \mathcal{Q}* \mathcal{U}_{C,k} *\mathcal{U}_{C,k}^{\top} *\mathcal{Q}^{\top}*\mathcal{A}\\
&=& \mathcal{A}-\mathcal{Q}*\mathcal{C}_{k}*\mathcal{C}_{k}^{\dagger}*\mathcal{C}\\
&=& \mathcal{A}- \mathcal{Q}*\mathcal{C}_{k}*\mathcal{C}_{k}^{\dagger}*(\mathcal{C}_{k}+\mathcal{C}_{k,\perp})\\
&=& \mathcal{A}- \mathcal{Q}*\mathcal{C}_{k}\\
&=& \mathcal{A}- \mathcal{Q}*(\mathcal{Q}^{\top}*\mathcal{A})_{k}.
\end{eqnarray*}
\end{proof}
Lemma \ref{mn38} concludes that the best rank-k approximation for $\mathcal{A}$ from ${\rm range}(\mathcal{K})$ is
$ \mathcal{Q}*(\mathcal{Q}^{\top}*\mathcal{A})_{k} $. In other words
\begin{eqnarray}\label{mn39}
\parallel \mathcal{A}- \mathcal{Q}*(\mathcal{Q}^{\top}*\mathcal{A})_{k}\parallel_{F}^{2}= \min_{T-rank(\mathcal{Y})\leq k}\parallel \mathcal{A}- \mathcal{Q}*\mathcal{Y}\parallel_{F}^{2}.
\end{eqnarray}

\begin{thm}
Let $ \mathcal{Q} $ be an orthogonal basis for $ \mathcal{K} $ and $ \widehat{\mathcal{U}}_{k} $ be the top $k$ vertical laterals of $ \widehat{\mathcal{U}}$ for the Algorithm \ref{ALg_2}. Then, 
\begin{eqnarray*}
\parallel \mathcal{A}-\widehat{\mathcal{U}}_{k}*\widehat{\mathcal{U}}_{k}^{\top} *\mathcal{A} \parallel_{F}^{2}\leq \parallel\mathcal{A}_{k}- \mathcal{Q}*\mathcal{Q}^{\top}*\mathcal{A}_{k} \parallel_{F}^{2} + \parallel\mathcal{A}_{k,\perp}\parallel_{F}^{2}.
\end{eqnarray*}
\end{thm}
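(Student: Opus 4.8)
The plan is to reduce the statement to the optimality property already recorded in \eqref{mn39} and then bound the resulting best-in-subspace error by a single convenient competitor, after which a Pythagorean splitting finishes the argument.

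First I would invoke Lemma \ref{mn38} together with \eqref{mn39} to rewrite the left-hand side as a minimum over low T-rank cores,
\[
\parallel \mathcal{A}-\widehat{\mathcal{U}}_{k}*\widehat{\mathcal{U}}_{k}^{\top} *\mathcal{A} \parallel_{F}^{2} = \parallel \mathcal{A}- \mathcal{Q}*(\mathcal{Q}^{\top}*\mathcal{A})_{k}\parallel_{F}^{2} = \min_{\text{T-rank}(\mathcal{Y})\leq k}\parallel \mathcal{A}- \mathcal{Q}*\mathcal{Y}\parallel_{F}^{2}.
\]
Since the minimum is no larger than the value at any admissible core, I would substitute the explicit choice $\mathcal{Y}=\mathcal{Q}^{\top}*\mathcal{A}_{k}$. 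This $\mathcal{Y}$ is admissible because $\text{T-rank}(\mathcal{Y})\leq \text{T-rank}(\mathcal{A}_{k})=k$, and it yields the upper bound $\parallel \mathcal{A}-\mathcal{Q}*\mathcal{Q}^{\top}*\mathcal{A}_{k}\parallel_{F}^{2}$.

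Next I would use the orthogonal split \eqref{mn37}, $\mathcal{A}=\mathcal{A}_{k}+\mathcal{A}_{k,\perp}$, to write $\mathcal{A}-\mathcal{Q}*\mathcal{Q}^{\top}*\mathcal{A}_{k}=(\mathcal{A}_{k}-\mathcal{Q}*\mathcal{Q}^{\top}*\mathcal{A}_{k})+\mathcal{A}_{k,\perp}$, and then show these two summands are Frobenius-orthogonal. The cleanest route is to verify the t-product identity $(\mathcal{A}_{k}-\mathcal{Q}*\mathcal{Q}^{\top}*\mathcal{A}_{k})*\mathcal{A}_{k,\perp}^{\top}=\mathcal{O}$, which reduces to $\mathcal{A}_{k}*\mathcal{A}_{k,\perp}^{\top}=\mathcal{O}$; the latter holds because the middle factor $\mathcal{V}_{k}^{\top}*\mathcal{V}_{k,\perp}$ is a zero tensor by the orthogonality of $\mathcal{V}$. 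A Pythagorean identity would then give $\parallel \mathcal{A}-\mathcal{Q}*\mathcal{Q}^{\top}*\mathcal{A}_{k}\parallel_{F}^{2}=\parallel \mathcal{A}_{k}-\mathcal{Q}*\mathcal{Q}^{\top}*\mathcal{A}_{k}\parallel_{F}^{2}+\parallel \mathcal{A}_{k,\perp}\parallel_{F}^{2}$, which is exactly the claimed right-hand side.

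The main obstacle is that Lemma \ref{mn440} is stated for the orthogonality $\mathcal{A}^{\top}*\mathcal{B}=\mathcal{O}$, whereas the split above only delivers the column-space orthogonality $\mathcal{M}*\mathcal{N}^{\top}=\mathcal{O}$ with $\mathcal{M}=\mathcal{A}_{k}-\mathcal{Q}*\mathcal{Q}^{\top}*\mathcal{A}_{k}$ and $\mathcal{N}=\mathcal{A}_{k,\perp}$; the row-space condition $\mathcal{M}^{\top}*\mathcal{N}=\mathcal{O}$ genuinely fails here, because of the projector sitting between $\mathcal{U}_{k}^{\top}$ and $\mathcal{U}_{k,\perp}$. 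I would bridge this gap by applying Lemma \ref{mn440} to the transposed tensors $\mathcal{M}^{\top}$ and $\mathcal{N}^{\top}$: their hypothesis $(\mathcal{M}^{\top})^{\top}*\mathcal{N}^{\top}=\mathcal{M}*\mathcal{N}^{\top}=\mathcal{O}$ is precisely what we have, and since transposition preserves the Frobenius norm, the conclusion $\parallel \mathcal{M}^{\top}+\mathcal{N}^{\top}\parallel_{F}^{2}=\parallel \mathcal{M}^{\top}\parallel_{F}^{2}+\parallel \mathcal{N}^{\top}\parallel_{F}^{2}$ is equivalent to the desired $\parallel \mathcal{M}+\mathcal{N}\parallel_{F}^{2}=\parallel \mathcal{M}\parallel_{F}^{2}+\parallel \mathcal{N}\parallel_{F}^{2}$. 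Equivalently, one may observe that the Fourier-domain argument proving Lemma \ref{mn440} applies verbatim under the hypothesis $\mathcal{M}*\mathcal{N}^{\top}=\mathcal{O}$. Chaining the three bounds then yields the theorem.
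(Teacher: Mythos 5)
Your proposal is correct and follows essentially the same route as the paper's proof: rewrite the left-hand side via Lemma \ref{mn38} and the optimality relation \eqref{mn39}, bound the minimum by the competitor $\mathcal{Y}=\mathcal{Q}^{\top}*\mathcal{A}_{k}$, split $\mathcal{A}=\mathcal{A}_{k}+\mathcal{A}_{k,\perp}$, and finish with the Pythagorean identity. Your additional observation is a genuine improvement in rigor: the available orthogonality is indeed $\mathcal{M}*\mathcal{N}^{\top}=\mathcal{O}$ (via $\mathcal{V}_{k}^{\top}*\mathcal{V}_{k,\perp}=\mathcal{O}$) rather than the hypothesis $\mathcal{M}^{\top}*\mathcal{N}=\mathcal{O}$ of Lemma \ref{mn440} as stated, and your transposition argument correctly bridges this gap, which the paper's proof passes over silently.
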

\begin{proof}
Lemma \ref{mn440}, Lemma \ref{mn38}, and relation \eqref{mn41} together imply that
\begin{eqnarray*}
\parallel\mathcal{A}-\widehat{\mathcal{U}}_{k}*\widehat{\mathcal{U}}_{k}^{\top} *\mathcal{A} \parallel_{F}^{2}&=& \parallel\mathcal{A}- \mathcal{Q} *(\mathcal{Q}^{\top}*\mathcal{A})_{k} \parallel_{F}^{2}\\
&\leq &\parallel\mathcal{A}- \mathcal{Q} *\mathcal{Q}^{\top}*\mathcal{A}_{k}\parallel_{F}^{2} \\
&=&\parallel \mathcal{A}_{k}+ \mathcal{A}_{k,\perp}- \mathcal{Q} *\mathcal{Q}^{\top}*\mathcal{A}_{k}\parallel_{2}^{F}\\
&=& \parallel \mathcal{A}_{k}- \mathcal{Q} *\mathcal{Q}^{\top}*\mathcal{A}_{k}\parallel_{F}^{2}+\parallel \mathcal{A}_{k,\perp} \parallel_{F}^{2},
\end{eqnarray*}
the first inequality is obtained from \eqref{mn39} and the second equality is obtained from\eqref{mn37}.
\end{proof}

\section{Experimental results}\label{sec:experi}
This section presents the experimental results. We conducted experiments to compress images using MATLAB on a Laptop computer with Intel(R) Core(TM) i7-10510U CPU processor and 16 GB memory. 

The PSNR of two images ${\mathcal X}$ and ${\mathcal Y}$ is defined as
\[
{\rm PSNR}=10\log _{10}\left({\frac{255^2}{{\rm MSE}}}\right)\,{\rm dB},
\]
where ${\rm MSE}=\frac{||{\mathcal X}-{\mathcal Y}||_F^2}{{\rm num}({\mathcal X})}$ and ''num(${\mathcal X}$)'' stands for the number of elements of the data tensor ${\mathcal X}$.
The relative error is also defined as 
\[
e(\widetilde{\mathcal X})=\frac{\|\mathcal X-\widetilde{\mathcal X}\|_F}{\|\mathcal X\|_F},
\]
where $\mathcal X$ is the original tensor and $\widetilde{\mathcal X}$ is the approximated tensor. 

\begin{exa}\label{EX:1} (synthetic data) In this example, we consider synthetic data tensors of low tubal rank structure. Let us generate a tensor of dimension $n\times n\times n$ of low tubal rank where the singular values of the frontal slices are all the same as 
$S={\rm diag}(\sigma_1,\sigma_2,\ldots,\sigma_n)
$
where
\begin{itemize}
\item Case 1: $\sigma_m=\frac{1}{m^5}$ for $m=1,2,\ldots,n$

\item Case 2: $\sigma_m=\frac{1}{m^6}$ for $m=1,2,\ldots,n$

\item Case 3: $\sigma_m=(0.5)^m$ for $m=1,2,\ldots,n$
\end{itemize}
The proposed randomized algorithm and the baseline (Algorithm \ref{ALg_1}) are applied to the mentioned data tensor with the parameters $P=5,\,q=2$ and $R=45$. The relative errors and running times of the algorithms are reported in Figures \ref{fig1:ex1}-\ref{fig3:ex1}. We should point out that in Algorithm \ref{ALg_2}, in Line 7, the factor $\mathcal{Q}$ was truncated to $k+p$ term as our simulations showed that there is no difference between truncation and the full factor for this synthetic data, and our reported running times are for this case. If we consider the full factor $\mathcal{Q}$, the running time of Algorithm \ref{ALg_2} will be higher than Algorithm \ref{ALg_1}. What we see from our numerical experiments is that the proposed randomized algorithm can provide estimates with better accuracy compared to the baseline, with a slightly higher running time. As was discussed in Section \ref{Sec:prop}, the superior performance of Algorithm \ref{ALg_2}, is because it can capture a much richer spectral range, leading to faster convergence and higher accuracy for the same number of tensor operations.

We also made a sensitivity analysis on the oversampling and power iteration parameters. Our observation was that for tensors with frontal slices with fast singular value decay, the small oversampling parameter, e.g. \(p = 5, 10, 20\) is sufficient, often independent of the target rank \(R\). Additionally, for the power iteration parameter, small integers, e.g., \(q = 0, 1, 2, 3\), were sufficient and rarely exceeded \(q = 4 \) due to numerical stability concerns.

\begin{figure}
\begin{center}
\includegraphics[width=0.7\linewidth]{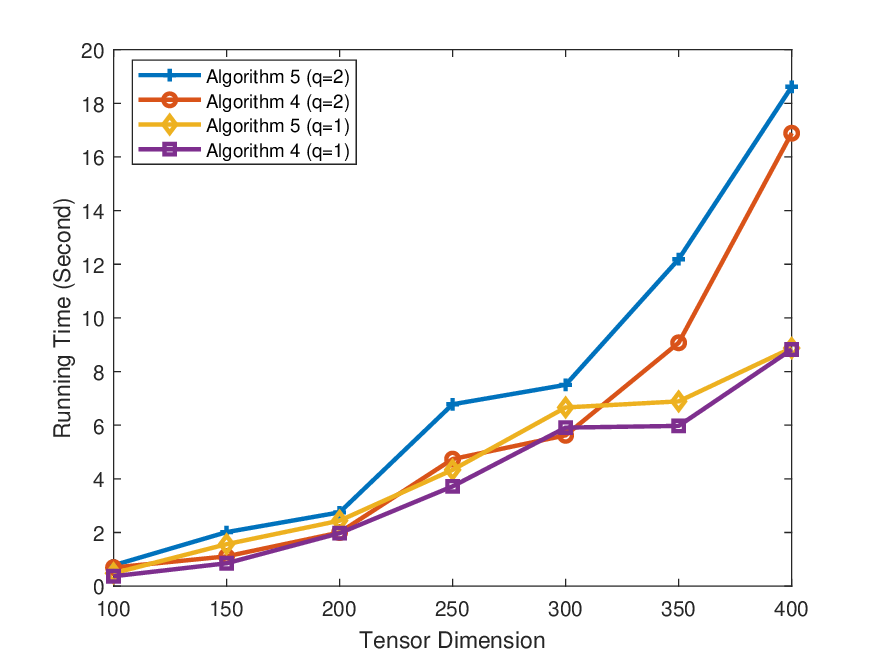}
\includegraphics[width=0.7\linewidth]{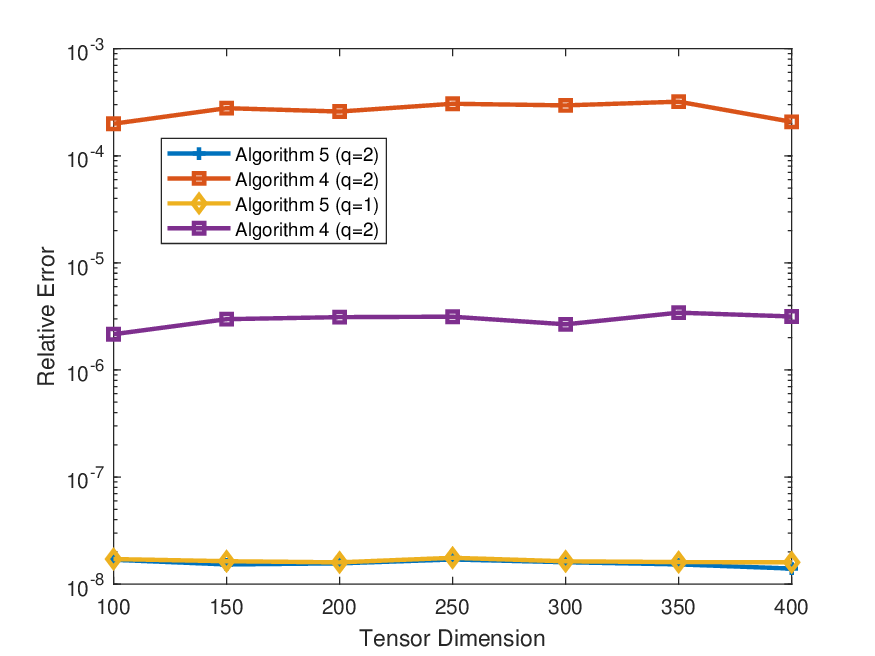}
\caption{\small{Relative error and computing time for Algorithms \ref{ALg_1} and \ref{ALg_2}} for Example \ref{EX:1}, Case 1.}\label{fig1:ex1}
\end{center}
\end{figure}

\begin{figure}
\begin{center}
\includegraphics[width=0.7\linewidth]{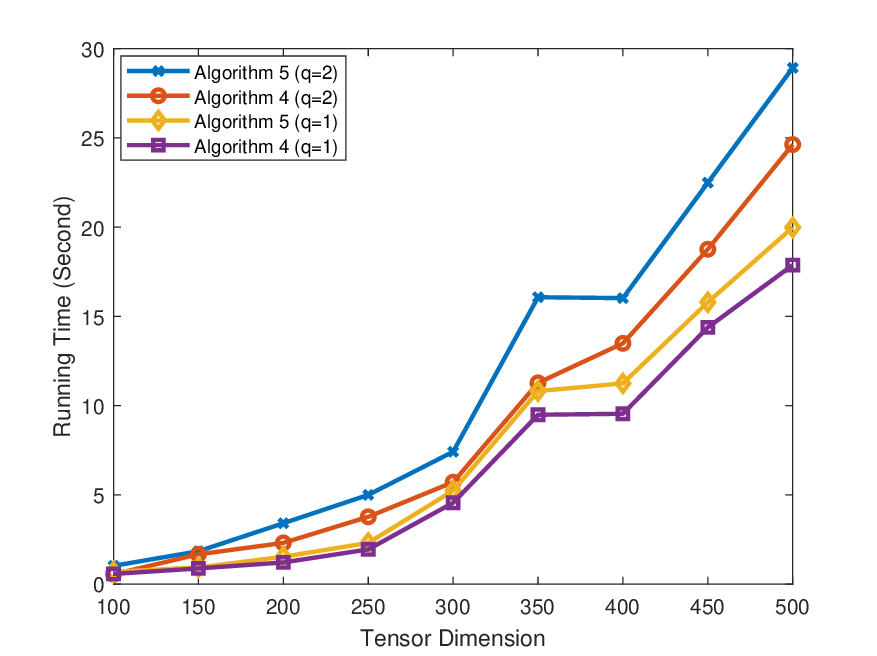}
\includegraphics[width=0.7\linewidth]{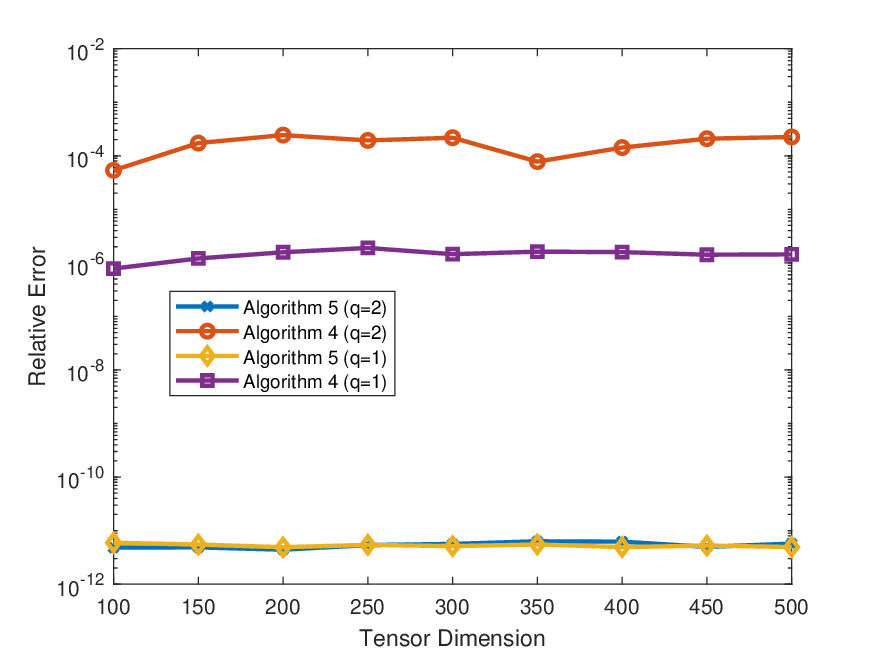}
\caption{\small{Relative error and computing time for Algorithms \ref{ALg_1} and \ref{ALg_2}} for Example \ref{EX:1}, Case 2.}\label{fig2:ex1}
\end{center}
\end{figure}

\begin{figure}
\begin{center}
\includegraphics[width=0.7\linewidth]{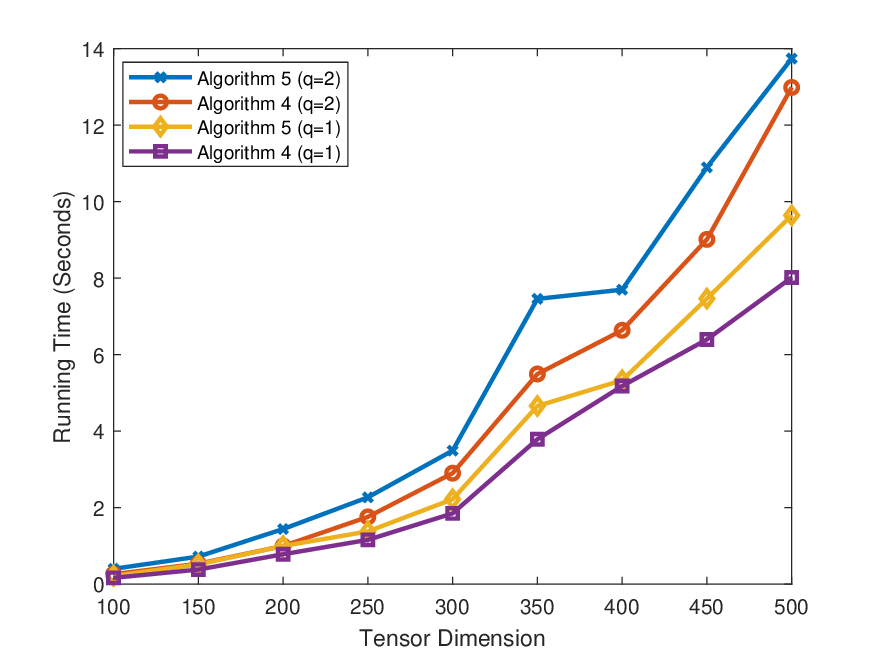}
\includegraphics[width=0.7\linewidth]{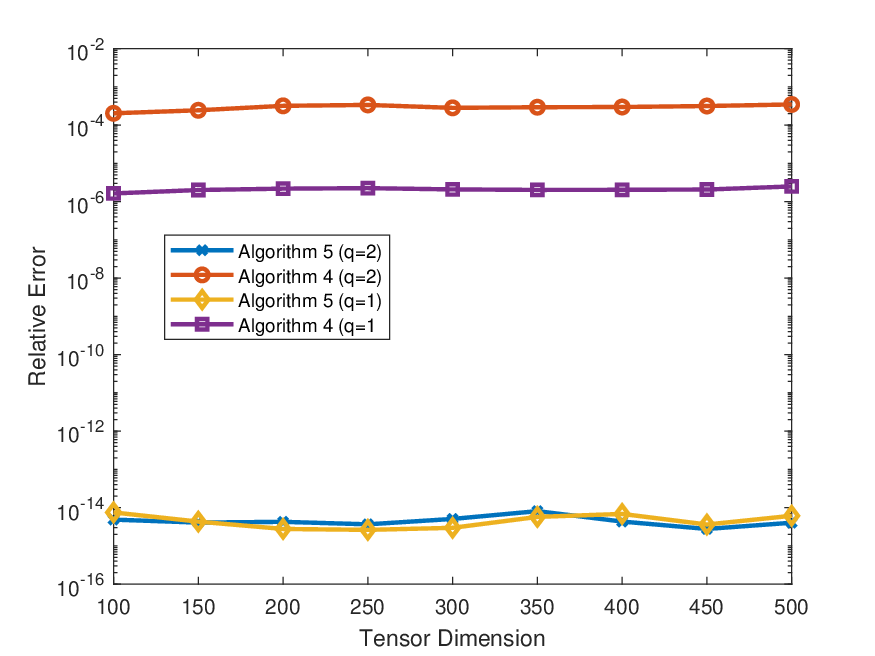}
\caption{\small{Relative error and computing time for Algorithms \ref{ALg_1} and \ref{ALg_2}} for Example \ref{EX:1}, {Case 3}.}\label{fig3:ex1}
\end{center}
\end{figure}

\end{exa}

\begin{exa}\label{EX:2}
(Image compression) In this example, we consider the application of the proposed randomized block Krylov subspace method for compressing color images. We consider three color images ``peppers'', ``baboon'' and ``house'' depicted in Figure \ref{fig:ex2}. All images have size $256\times 256\times 3$. Then, the proposed randomized approach and the benchmark Algorithm \ref{ALg_1} were applied to the three images using $P=5,\,q=2$ and the tubal rank $R=25$. The reconstructed images computed by them are shown in Figure \ref{fig:ex2}. The proposed algorithm can achieve marginally better results than the baseline. At the same time, its running time is a little higher due to dealing with a block of matrices (the tensor $\mathcal{K}$ in Algorithm \ref{ALg_2}) instead of only one single matrix (the tensor $\mathcal{B}$ in Line 7 of Algorithm \ref{ALg_1}). This requires extra computations. The simulation results indicate that the proposed randomized algorithm yields better results than the baseline. The sensitivity analysis on the oversampling and power iteration parameters was the same as the first example. More precisely, for the oversampling parameter, we achieved smooth improvement for 1 to 10. For the power iteration from $q=1$ to, $q=2,$ we saw a significant improvement, while for the $q\geq 2$ the improvement was not so much.

\begin{figure}
\begin{center}
\includegraphics[width=1\linewidth]{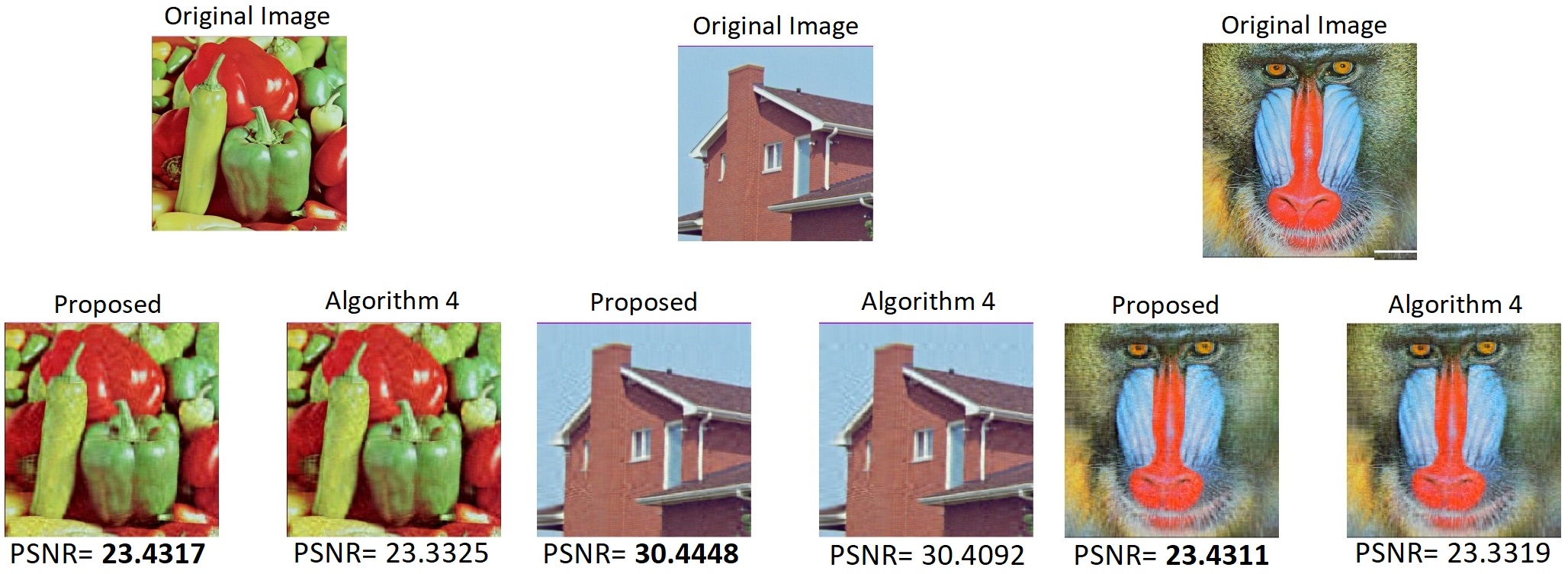}
\caption{\small{Comparing the PSNR of compressed images using Algorithms \ref{ALg_1} and \ref{ALg_2}} for Example \ref{EX:2}.}\label{fig:ex2}
\end{center}
\end{figure}

\end{exa}

\begin{exa}\label{EX:3} (Image completion) In this experiment, we apply the proposed randomized block Krylov subspace method for the data completion task. We use the methodology proposed in \cite{ahmadi2023fast,ahmadi2024randomized}, where a low-rank approximation is required in the completion process. To ensure the paper is self-contained, we will briefly describe it. The image completion can be formulated as an optimization problem:
\begin{equation}\label{MinRankCompl2}
\begin{array}{cc}
\displaystyle\min_{{\mathcal X}} & {\|{{ P}_{{\mathcal{T}}} }({{\mathcal X}})-{{ P}_{{\mathcal T}} }({{\mathcal M}})\|^2_F},\\
\textrm{s.t.} & {\rm Rank}({\mathcal X})=R,\\
\end{array}
\end{equation}
where ${\mathcal M}$ is the exact data image, the operator ${{\bf P}_{\mathcal T} }\left( {\mathcal X} \right)$ projecting the data tensor $\mathcal X$ onto the observation index tensor 
$\mathcal{T}$ is defined as
\[
{{P}_{\mathcal T} }\left( {\mathcal X} \right) = \left\{ \begin{array}{l}
{x_{{i_1},{i_2}, \ldots ,{i_N}}}\,\,\,\,\,\,\left( {{i_1},{i_2}, \ldots ,{i_N}} \right) \in \mathcal{T}. \\
0\,\,\,\,\,\,\,\,\,\,\,\,\,\,\,\,\,\,\,\,\,\,\,\,\,\,\,\,\,\,\,{\rm Otherwise}.
\end{array} \right.
\]
Using an auxiliary variable ${{\mathcal C}}$, the optimization problem \eqref{MinRankCompl2} can be solved more conveniently by the following reformulation
\begin{align}\label{MinRankCompl3}
\nonumber
\min_{{\mathcal X},\,{\mathcal C}}  {\|{{\mathcal X}}-{{\mathcal C}}\|^2_F},\\
\nonumber
\textrm{s.t.}  \,\,\,{\rm Rank}({\mathcal X})=R,\,\\
{{P}_{{\mathcal T}} }({{\mathcal C}})={{ P}_{{\mathcal T}} }({{\mathcal M}})
\end{align}
thus we can solve the minimization problem \eqref{MinRankCompl3} over variables ${\mathcal X}$ and ${\mathcal C}$. This is indeed an ALS ({Alternating Least Squares}) method that alternates between fixing one factor matrix and optimizing the other, effectively reducing the optimization problem into smaller subproblems. So, the solution to the minimization problem \eqref{MinRankCompl2} can be approximated by the following iterative procedures:
\begin{equation*}\label{Step1}
{\mathcal X}^{(n)}\leftarrow \mathcal{L}({\mathcal C}^{(n)}),
\end{equation*}
\begin{equation*}\label{Step2}
{\mathcal C}^{(n+1)}\leftarrow{\mathcal  T}\oast{\mathcal M}+({\mathbf 1}-{\mathcal T})\oast{\mathcal X}^{(n)},
\end{equation*}
where $\mathcal{L}$ is an operator to compute a low-rank tubal approximation of the data tensor ${\mathcal C}^{(n)}$, $\oast$ is a symbol to denote the element-wise multiplication of two tensors, and ${\mathbf 1}$ is a tensor whose all components are equal to one. Here, we apply the proposed Algorithm \ref{ALg_2} and the baseline \ref{ALg_1} for the operator $\mathcal{L}$. For this, we consider Kodak dataset\footnote{https://r0k.us/graphics/kodak/}, some of its samples are depicted in Figure \ref{fig1:ex31}. We use three benchmark images: ``Kodim01'', ``Kodim02'' and ``Kodim03'' from Kodak dataset. The size of the images is $512\times 768\times 3$. We randomly remove 70\% of the pixels and attempt to recover the mixing pixels with a tubal rank of 50. The observed, original, and recovered images are displayed in Figure \ref{fig:ex3}. The oversampling parameter $P=10$ and power iteration $q=2$ were used in Algorithms \ref{ALg_1} and \ref{ALg_2}. As can be seen, the proposed algorithm can successfully recover the missing pixels. The computing time of Algorithm \ref{ALg_1} was approximately 40 seconds, while it was 47 seconds for Algorithm \ref{ALg_2}, indicating that their computational complexity behaves similarly. To further evaluate the efficiency of the proposed approach for reconstructing incomplete images with structured missing patterns, we considered six distinct pattern types: Ocular lines (rows/columns), Ocular lines with a higher density of lines, Ocular circles, Ocular circles with a higher density of circles, Ocular circles with larger circles, and Text masks. These patterns, applied to the ``Kodim03'' image, are illustrated in Figure~\ref{figstru_miss:ex3}. The image completion was performed using a tubal rank of $R=30$ for 200 iterations. The resulting recovered images are also shown in Figure~\ref{figstru_miss:ex3}.

Furthermore, we examined the proposed technique under very high missing ratios of 90\%, 95\%, and 98\%. The results for these extreme cases are reported in Figure~\ref{fig:extreme_case}. While the same parameters were used for the 90\% case (tubal rank $R=15$ and 100 iterations), a lower tubal rank $R=15$ and 500 iterations were necessary to achieve satisfactory results for the 95\% and 98\% missing ratios.
\begin{figure}
\begin{center}
\includegraphics[width=0.7\linewidth]{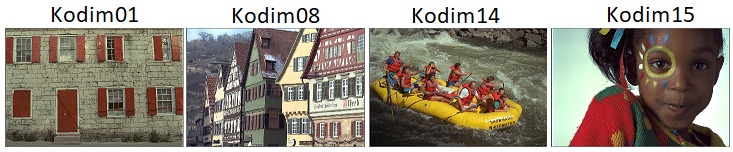}\caption{\small{Some random samples of the kodak dataset}}\label{fig1:ex31}
\end{center}
\end{figure}

Three completion methods as baselines, including: Bayesian tensor CPD (Canonical Polyadic decomposition) completion\footnote{https://github.com/qbzhao/BCPF} \cite{zhao2015bayesian}, tensor ring completion\footnote{https://github.com/HuyanHuang/Tensor-completion-via-tensor-ring-decomposition} \cite{huang2020provable} and tensor train completion\footnote{https://github.com/zhaoxile/} \cite{ding2019low} were used. The comparison was made for the ``kodim03'' image and for high missing ratios. The recovered images by these algorithms are shown in Figure \ref{fig:comp}. For all of these algorithms, the default parameters and tensor ranks are used in the demo files of their GitHub pages.
Our experiments show the method is highly effective for image completion, performing robustly under both random and structured missing patterns. Comparing our results with the baselines for 95\%, and 98\% shows significant improvement. The Bayesian tensor CPD and tensor train completion were very slow, but the tensor ring completion algorithm was faster. The running time of our algorithm was comparable to the tensor ring completion, but it was approximately 4-5 times faster. {
For a fair comparison, we also included two randomized tensor completion methods~\cite{ahmadi2020randomized,ahmadi2025efficient}. Using the same baseline images—\texttt{Kodim01}, \texttt{Kodim02}, and \texttt{Kodim03}—we reshaped them into tensors of size $8 \times 8 \times 8 \times 8 \times 8 \times 4 \times 9$ and applied a tensor ring rank of $(10,10,10,10,10,10,10)$. With the method from~\cite{ahmadi2020randomized}, we obtained PSNRs of $(27.45\,{\rm dB},\,22.65\,{\rm dB},\,28.11\,{\rm dB})$\footnote{The first, second, and third components are the results for \texttt{Kodim01}, \texttt{Kodim02}, and \texttt{Kodim03}, respectively.} and computation times of $(30.34,\,31.49,$ $\,29.98)$\footnote{The first, second, and third components are the results for \texttt{Kodim01}, \texttt{Kodim02}, and \texttt{Kodim03}, respectively.}(in seconds). The method in~\cite{ahmadi2025efficient} achieved PSNRs of $(29.94\,{\rm dB},$ 
$\,24.41\,{\rm dB},\,30.19\,{\rm dB})$ and computation times of $(12.56\,,13.88,\,13.22)$(in seconds). Our approach produced results very similar to those of~\cite{ahmadi2025efficient} while being slightly faster. Moreover, compared to the method in~\cite{ahmadi2023fast}, our algorithm was significantly faster and more efficient. These findings further confirm that the proposed randomized algorithm is highly effective for completion tasks.}

We should also highlight that we observed similar robustness in the sensitivity analysis for the oversampling and power iteration parameters. This demonstrates that the algorithm's performance is not highly sensitive to the specific choices for these parameters.

\begin{figure}
\begin{center}
\includegraphics[width=0.7\linewidth]{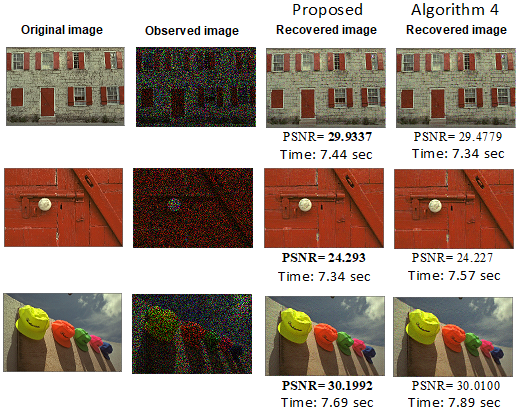}
\caption{\small{Relative error and computing time for Algorithms \ref{ALg_1} and \ref{ALg_2}} for example \ref{EX:3}.}\label{fig:ex3}
\end{center}
\end{figure}

\begin{figure}
    \centering
    \includegraphics[width=1.5\linewidth, angle=90]{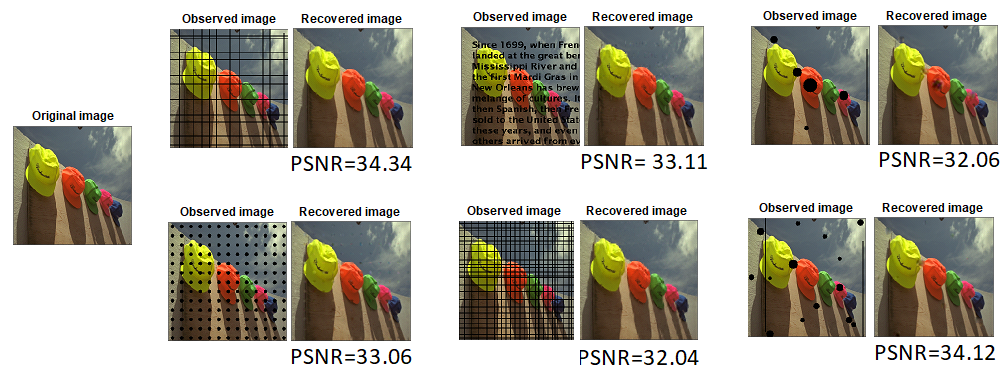}
    \caption{\small{The PSNR achieved by Algorithm \ref{ALg_2}} for example \ref{EX:3} in the structured missing pattern case (six types of missing patterns).}
    \label{figstru_miss:ex3}
\end{figure}

\begin{figure}
\begin{center}
\includegraphics[width=1.05\linewidth]{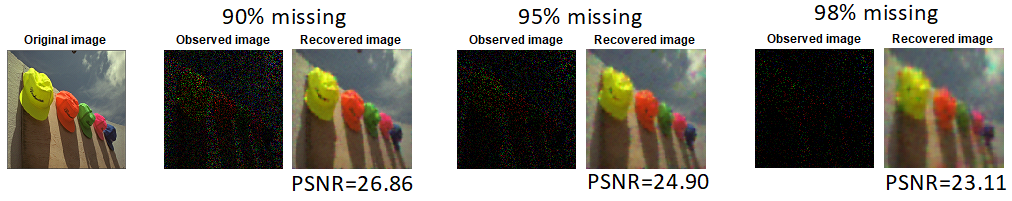}
\caption{\small{The PSNR  for Algorithms \ref{ALg_2}} for images with high missing ratios, for example \ref{EX:3}.}\label{fig:extreme_case}
\end{center}
\end{figure}


\begin{figure}
\begin{center}
\includegraphics[width=0.75\linewidth]{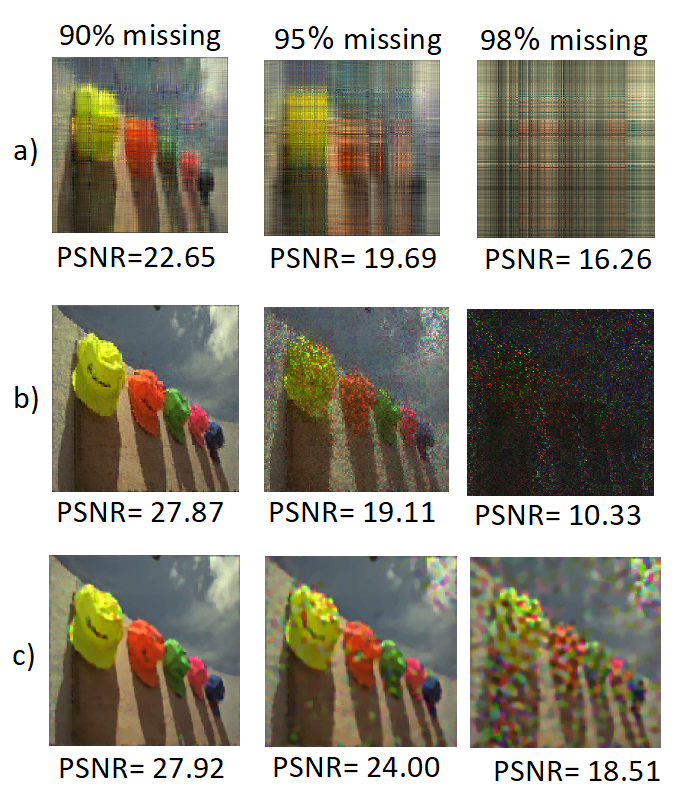}
\caption{\small{The completion results obtained by a) Bayesian tensor CPD (Canonical Polyadic decomposition) completion \cite{zhao2015bayesian}, b) tensor ring completion \cite{huang2020provable}, and c) tensor train completion  \cite{ding2019low} for images with high missing ratios, for example \ref{EX:3}.}}\label{fig:comp}
\end{center}
\end{figure}
  
\end{exa}

\begin{exa}\label{ex_4}
For this experiment, we use the Kodak dataset\footnote{https://r0k.us/graphics/kodak/}, comprising 23 images of size either $768 \times 512 \times 3$ or $512 \times 768 \times 3$. For consistency, all images were resized to $256 \times 256 \times 3$. We remove 80\% of pixels in all images and use tubal tensor rank $R=40$, the oversampling parameter, $P=10$, and the power iteration parameter $q=2$. We use 100 iterations of the completion algorithm for the image recovery process. The PSNRs of the recovered images for the whole Kodak dataset are shown in Figure \ref{fig:ex4}. This experiment demonstrated the feasibility of the completion algorithm, which is empowered by the proposed randomized block Krylov method. On average, completion of each image took 13 seconds; however, the algorithm's computation can be significantly accelerated on GPU devices. Furthermore, when processing a batch of images, the completion process can be executed in parallel for each image, substantially reducing the total processing time.

\begin{figure}
    \centering
    \includegraphics[width=1.15\linewidth]{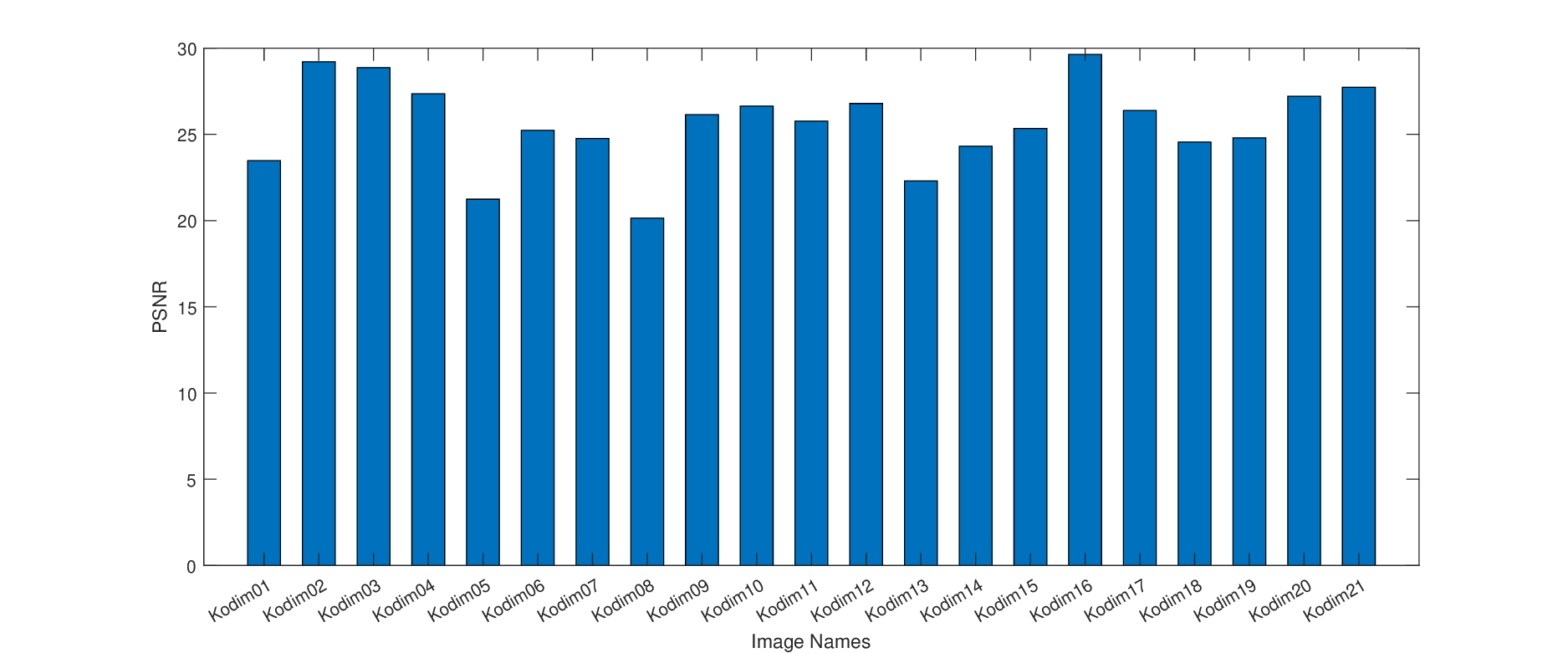}
    \caption{The PSNR of the recovered images for all images of the Kodak dataset for Example \ref{ex_4}.}
    \label{fig:ex4}
\end{figure}
\end{exa}

\section{Conclusion and future works}\label{sec:conclu}
 In this paper, we introduced the randomized block Krylov subspace method for approximating the truncated tensor singular value decomposition. We have currently examined this problem from a theoretical perspective and conducted numerical experiments using both synthetic and real-world datasets to validate our theoretical findings. Our numerical results clearly showed that the proposed algorithm is not only fast but also capable of delivering accurate approximations of the truncated T-SVD efficiently. We also present two applications of this randomized algorithm in the fields of data compression and completion. Currently, we are exploring the use of the randomized block Krylov subspace method for other types of tensor decompositions. A promising direction for future work is the application of our randomized block Krylov method to enhance the adversarial robustness of deep neural networks, a topic we are currently exploring.

 \section{Acknowledgment}
The work was supported by the Ministry of Economic Development of the Russian Federation under Agreement No. 139-10-2025-034 dd. 19.06.2025, IGK 000000C313925P4D0002.

 \bibliographystyle{elsarticle-num} 
 \bibliography{cas-refs}

\end{document}